\documentclass[11pt]{article}
\usepackage[margin=1in]{geometry}
\usepackage{amsmath,amssymb,amsthm,mathtools}
\usepackage{enumitem}
\usepackage[colorlinks=true,citecolor=blue,linkcolor=blue,urlcolor=blue]{hyperref}

\hypersetup{
  pdftitle={Terminal Defects, Growing Multiplicity, and Variance Extremality in the Double Dixie Cup Problem},
  pdfauthor={Christopher D. Long},
  pdfsubject={Probability theory; coupon collector problem},
  pdfkeywords={Double Dixie cup, coupon collector, terminal defects, Gumbel limit, variance extremality}
}

\title{Terminal Defects, Growing Multiplicity, and Variance Extremality\\in the Double Dixie Cup Problem}
\author{Christopher D. Long\\Headlamp Software\\\texttt{galizur@gmail.com}}
\date{}

\newtheorem{theorem}{Theorem}[section]
\newtheorem{proposition}[theorem]{Proposition}
\newtheorem{lemma}[theorem]{Lemma}
\newtheorem{corollary}[theorem]{Corollary}
\newtheorem{conjecture}[theorem]{Conjecture}

\newtheorem{hypothesis}[theorem]{Hypothesis}
\theoremstyle{remark}
\newtheorem{remark}[theorem]{Remark}

\newcommand{\E}{\mathbb E}
\newcommand{\Pbb}{\mathbb P}
\newcommand{\R}{\mathbb R}
\newcommand{\one}{\mathbf 1}
\newcommand{\Var}{\operatorname{Var}}
\newcommand{\Cov}{\operatorname{Cov}}
\newcommand{\GammaDist}{\operatorname{Gamma}}
\newcommand{\Exp}{\operatorname{Exp}}
\newcommand{\dd}{\,d}

\begin{document}
\maketitle

%\begin{center}
%\noindent\textbf{2020 Mathematics Subject Classification.}
%Primary 60C05; Secondary 60F05, 60G70, 60E15.

%\smallskip

%\noindent\textbf{Keywords.}
%Double Dixie cup problem, coupon collector, terminal defects, Gumbel limit,
%variance extremality, unequal probabilities.
%\end{center}

\begin{abstract}
We develop a terminal-defect method for the double Dixie cup problem and use it to prove the finite-variance extremality conjecture of Doumas and Papanicolaou.  For every \(m\ge1\) and \(N\ge2\), among all positive coupon probability vectors \(p=(p_1,\ldots,p_N)\), the variance of the time \(T_m(N)\) to collect \(m\) complete sets is uniquely minimized at the uniform vector.  We prove the stronger radial statement that the variance is strictly increasing along every ray from the uniform vector.  The proof is finite-\(N\) and exact: after Poissonization, the completion time is a maximum of independent Erlang variables, and the radial derivative of its distribution is compared to a size-biased law using a monotone-likelihood-ratio argument based on a log-scale monotonicity property of the Gamma reverse hazard.

The same framework gives a growing-multiplicity Gumbel theorem in the equal-probability case, with expectation and variance asymptotics on the inverse gamma-tail scale.  This recovers the fixed-\(m\) equal-probability variance asymptotic stated as Conjecture 1 by Doumas and Papanicolaou, classically known for \(m=1\), and extends the mechanism to \(m=m_N\).  We also illustrate the unequal-probability theory with endpoint-Laplace limits for power-law probabilities.
\end{abstract}

%\tableofcontents

\section{Introduction}

Let $T_m(N)$ denote the number of draws needed to collect at least $m$ copies of each of $N$ coupon types.  In the classical equal-probability double Dixie cup problem, Newman and Shepp~\cite{NewmanShepp} studied the fixed-$m$ expectation problem, and Erd\H{o}s and R\'enyi~\cite{ErdosRenyi} obtained the corresponding Gumbel limit theorem.  Later equal-probability extensions include work of Kaplan~\cite{Kaplan1977} and Flatto~\cite{Flatto1982}.  Doumas and Papanicolaou~\cite{DoumasPapanicolaou2016} extended the double Dixie cup problem to broad classes of unequal coupon probabilities, developed asymptotics for the first two rising moments, and formulated two variance conjectures.  The first is an asymptotic conjecture in the equal-probability case, and the second is a finite-$N$ extremality conjecture: equal probabilities should minimize the variance of $T_m(N)$ among all positive coupon probability vectors.

The Poissonization and rising-moment identities used below are recalled from Doumas and Papanicolaou~\cite{DoumasPapanicolaou2016}.  Formulae for the expectation and second rising moment had appeared earlier in Brayton's work~\cite{Brayton1963}; the general rising-moment representation below is used here as the basic exact identity.  The cases $m=1$ of the two conjectural statements require separate credit: the equal-probability variance asymptotic follows from Brayton's stronger bounded-distortion theorem~\cite{Brayton1963}, while the $m=1$ finite variance-minimization conjecture was proved by Yu~\cite{Yu2017}.  To the best of our knowledge, the cases $m\ge2$ of these two conjectures remained open.  Our asymptotic modules should therefore be read as streamlined terminal-defect derivations and extensions of the Doumas--Papanicolaou framework, while the main new finite result is the proof of their variance-minimization conjecture in full.

The purpose of this note is to record a modular approach based on terminal defects.  The central observation is that, after Poissonization, the time to completion is a maximum of independent Erlang variables.  Thus completion occurs exactly when the number of terminal defects is zero.  The proof strategy is therefore:
\[
        \text{terminal defect mass} \quad \longrightarrow \quad \text{zero-defect probability} \quad \longrightarrow \quad \text{completion law}.
\]
This gives a short route to Gumbel limits, moment convergence, and some unequal-probability limits.  It also gives an exact finite-$N$ proof of the variance extremality conjecture.

\subsection{Summary of results}

The main results and modules recorded here are as follows.

\begin{enumerate}[label=\textup{(\arabic*)},leftmargin=*]
\item \textbf{Equal probabilities and growing multiplicity.}  Let
\[
        Q_m(x)=e^{-x}\sum_{j=0}^{m-1}{\frac{x^j}{j!}},
        \qquad
        f_m(x)=e^{-x}{\frac{x^{m-1}}{(m-1)!}}.
\]
Let $b_n$ solve $nQ_{m_n}(b_n)=1$ and put
\[
        a_n={\frac{Q_{m_n}(b_n)}{f_{m_n}(b_n)}}.
\]
Then
\[
        {\frac{T_{m_n}(n)-nb_n}{n a_n}}\Rightarrow G,
\]
where $G$ is standard Gumbel.  With the uniform-integrability module,
\[
        \E T_{m_n}(n)=nb_n+\gamma n a_n+o(n a_n),
\]
and
\[
        \Var(T_{m_n}(n))={\frac{\pi^2}{6}}n^2a_n^2+o(n^2a_n^2).
\]
For fixed $m$, this recovers the Erd\H{o}s--R\'enyi--Newman--Shepp law~\cite{NewmanShepp,ErdosRenyi} and gives
\[
        \Var(T_m(N))\sim {\frac{\pi^2}{6}}N^2.
\]
For $m=1$ this consequence is classical by Brayton~\cite{Brayton1963}; for fixed $m\ge2$ it recovers the equal-probability variance asymptotic stated as Conjecture~1 by Doumas--Papanicolaou~\cite{DoumasPapanicolaou2016}, and the same statement here extends to growing $m=m_N$.

\item \textbf{Unequal-probability terminal-defect theorem.}  For triangular probabilities $p_{N,j}$ and multiplicities $m_N$, define
\[
        M_N(t)=\sum_{j=1}^N Q_{m_N}(p_{N,j}t).
\]
If
\[
        M_N(B_N+C_Nx)\to \Lambda(x)
        \quad\text{and}\quad
        \sum_{j=1}^N Q_{m_N}(p_{N,j}(B_N+C_Nx))^2\to0,
\]
then the Poissonized completion time satisfies
\[
        \Pbb\left({\frac{X_N-B_N}{C_N}}\le x\right)\to e^{-\Lambda(x)}.
\]
If in addition $B_N\to\infty$, $B_N/C_N\to\infty$, and $\sqrt{B_N}=o(C_N)$, the same limit holds for the discrete time $T_N$.
The Gumbel case is $\Lambda(x)=e^{-x}$.

\item \textbf{Case I unequal probabilities.}  This regime parallels Case I in Doumas--Papanicolaou~\cite{DoumasPapanicolaou2016}.  For $p_{N,j}=a_j/A_N$, $A_N=\sum_{j\le N}a_j$, and $\sum_j e^{-c a_j}<\infty$, fixed $m$ satisfies
\[
        {\frac{T_m(N)}{A_N}}\Rightarrow \sup_{j\ge1}Y_j,
\]
where $Y_j\sim\GammaDist(m,\text{rate }a_j)$ independently.  Moreover all fixed rising moments converge after the same scaling.

\item \textbf{Case II endpoint-Laplace module and a power-law example.}  This is the soft-endpoint regime of Doumas--Papanicolaou~\cite{DoumasPapanicolaou2016}.  For $a_j=1/f(j)$ and $p_{N,j}=1/(A_Nf(j))$, an endpoint-Laplace estimate gives a Gumbel law with the corresponding centering.  We state the abstract module under an explicit endpoint hypothesis and verify it for the generalized Zipf family $p_{N,j}\propto j^{-\alpha}$, $\alpha>0$, which is also treated in~\cite{DoumasPapanicolaou2016}.

\item \textbf{Finite variance extremality.}  For every $m\ge1$ and $N\ge2$, equal probabilities uniquely minimize $\Var(T_m(N))$ over all positive coupon probability vectors.  The proof is radial: moving away from uniform makes the Poissonized completion distribution shift right in a size-biased monotone-likelihood-ratio sense, which forces the Poisson-corrected variance to increase.
\end{enumerate}

\subsection{Reusable modules}

Several components appear reusable beyond the double Dixie problem.
\begin{itemize}[leftmargin=*]
\item The \emph{terminal-defect theorem}: if the expected number of terminal defects has a limiting profile and no atom dominates, then the zero-defect probability is exponential.
\item The \emph{gamma-tail inversion module}: the correct Gumbel centering is obtained by solving $nQ_m(b)=1$ and scaling by $Q_m(b)/f_m(b)$.
\item The \emph{Poisson-clock de-Poissonization module}: if the deterministic scaling window dominates the fluctuations of the total Poisson clock, Poissonized and discrete completion times have the same limit law.
\item The \emph{endpoint-Laplace module}: unequal probabilities with a soft endpoint reduce to a one-sided Laplace sum over the rarest coupons.
\item The \emph{finite variance extremality module}: Gamma reverse-hazard ratios give a size-biased MLR ordering of the radial derivative measure.  This proves global variance minimality at uniform.
\item The \emph{finite variance Hessian module}: strict local variance minimality also follows from a two-marked-coordinate Hessian and an order-statistic covariance identity.  This alternate proof is given in Appendix~\ref{app:hessian}.
\end{itemize}

\section{Poissonization and exact moment identities}

We begin by recalling the Poissonization and rising-moment representation recorded by Doumas and Papanicolaou~\cite{DoumasPapanicolaou2016}.

Fix a probability vector $p=(p_1,\ldots,p_N)$ with $p_j>0$ and $\sum_jp_j=1$.  In the Poissonized model the total arrivals form a rate-one Poisson process; each arrival is independently labelled $j$ with probability $p_j$.  Equivalently, coupon $j$ arrives according to an independent Poisson process of rate $p_j$.

Let $X_{m,p}$ be the continuous completion time.  The time for coupon $j$ to be seen $m$ times is Erlang with survival function
\[
        Q_m(p_jt)=e^{-p_jt}\sum_{a=0}^{m-1}{\frac{(p_jt)^a}{a!}}.
\]
Therefore
\begin{equation}\label{eq:poissonized-product}
        \Pbb(X_{m,p}\le t)=\prod_{j=1}^N(1-Q_m(p_jt)).
\end{equation}

Let $T_{m,p}$ denote the corresponding discrete number of draws.  If $S_k=U_1+\cdots+U_k$, with $U_i$ iid $\Exp(1)$, then under the standard coupling
\[
        X_{m,p}=S_{T_{m,p}}.
\]
Consequently, conditionally on $T_{m,p}$,
\[
        X_{m,p}\mid T_{m,p}\sim \GammaDist(T_{m,p},1).
\]
Thus, for every integer $r\ge1$,
\begin{equation}\label{eq:rising-transfer}
        \E X_{m,p}^r=\E\bigl[T_{m,p}^{(r)}\bigr],
\end{equation}
where $T^{(r)}=T(T+1)\cdots(T+r-1)$.  In particular,
$\E X_{m,p}=\E T_{m,p}$ and
$\E X_{m,p}^2=\E[T_{m,p}(T_{m,p}+1)]$, so
\[
        \Var(T_{m,p})=\Var(X_{m,p})-\E X_{m,p}.
\]

The tail-integral formula gives
\begin{equation}\label{eq:tail-moment}
        \E X_{m,p}^r=r\int_0^\infty t^{r-1}\Pbb(X_{m,p}>t)\dd t.
\end{equation}
Combining \eqref{eq:poissonized-product}, \eqref{eq:rising-transfer}, and \eqref{eq:tail-moment} gives exact formulae for the first two rising moments and hence the variance:
\begin{align}
        \E T_{m,p}&=\int_0^\infty \left[1-\prod_{j=1}^N(1-Q_m(p_jt))\right]\dd t,\label{eq:mean-integral}\\
        \E[T_{m,p}(T_{m,p}+1)]&=2\int_0^\infty t\left[1-\prod_{j=1}^N(1-Q_m(p_jt))\right]\dd t,\label{eq:rising2-integral}\\
        \Var(T_{m,p})&=\E[T_{m,p}(T_{m,p}+1)]-\E T_{m,p}-(\E T_{m,p})^2.\label{eq:variance-from-rising}
\end{align}

\begin{proposition}[Finite rational moment formula]\label{prop:finite-rational}
Let $A$ range over nonempty subsets of $[N]$, put $p_A=\sum_{i\in A}p_i$, and for $\alpha=(\alpha_i)_{i\in A}\in\{0,\ldots,m-1\}^A$ write
\[
        |\alpha|=\sum_{i\in A}\alpha_i,
        \qquad
        \alpha!=\prod_{i\in A}\alpha_i!,
        \qquad
        p^\alpha=\prod_{i\in A}p_i^{\alpha_i}.
\]
Then for every integer $r\ge1$,
\begin{equation}\label{eq:finite-rational}
        \E\bigl[T_{m,p}^{(r)}\bigr]
        =r\sum_{\emptyset\ne A\subseteq[N]}(-1)^{|A|+1}
        \sum_{\alpha\in\{0,\ldots,m-1\}^A}
        {\frac{p^\alpha}{\alpha!}}
        {\frac{(|\alpha|+r-1)!}{p_A^{|\alpha|+r}}}.
\end{equation}
\end{proposition}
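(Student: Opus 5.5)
Starting from the exact identities already recorded, \eqref{eq:rising-transfer} together with \eqref{eq:tail-moment} gives
\[
        \E\bigl[T_{m,p}^{(r)}\bigr]=r\int_0^\infty t^{r-1}\Bigl[1-\prod_{j=1}^N(1-Q_m(p_jt))\Bigr]\dd t .
\]
Expand the product by inclusion--exclusion:
\[
        1-\prod_{j=1}^N(1-Q_m(p_jt))=\sum_{\emptyset\ne A\subseteq[N]}(-1)^{|A|+1}\prod_{i\in A}Q_m(p_it).
\]
This is a finite sum, so integration can be done term by term, provided each term is integrable. Each term is a polynomial times $e^{-p_At}$ with $p_A>0$, so integrability is immediate.

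Next, expand each factor $Q_m(p_it)=e^{-p_it}\sum_{\alpha_i=0}^{m-1}(p_it)^{\alpha_i}/\alpha_i!$. The product over $i\in A$ becomes
\[
        e^{-p_At}\sum_{\alpha\in\{0,\ldots,m-1\}^A}\frac{p^\alpha}{\alpha!}\,t^{|\alpha|}.
\]
Multiplying by $t^{r-1}$ and using the Gamma integral
\[
        \int_0^\infty t^{|\alpha|+r-1}e^{-p_At}\dd t=\frac{(|\alpha|+r-1)!}{p_A^{|\alpha|+r}}
\]
yields \eqref{eq:finite-rational} after collecting the factor $r$ and the signs.

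The only points needing any care are the justifications behind these steps.
\begin{itemize}[leftmargin=*]
\item The tail formula \eqref{eq:tail-moment} requires $\E X^r<\infty$. This follows because $\Pbb(X_{m,p}>t)\le\sum_j Q_m(p_jt)$ decays exponentially.
\item The transfer identity \eqref{eq:rising-transfer} uses $\E[\GammaDist(k,1)^r]=k^{(r)}$ conditionally on $T_{m,p}=k$. Taking expectations then gives the identity, which holds even if one side is infinite (Tonelli). Finiteness of the right side then shows the rising moments of $T_{m,p}$ are finite.
\end{itemize}
I do not expect a genuine obstacle: once the exponential decay is noted, everything is a finite computation with nonnegative or absolutely integrable terms.
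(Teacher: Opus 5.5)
Your proposal is correct and follows the paper's proof step for step: you combine the tail-integral formula with the rising-moment transfer, expand the product by inclusion--exclusion, expand each Erlang survival factor into $e^{-p_At}\sum_\alpha p^\alpha t^{|\alpha|}/\alpha!$, and integrate term by term with the Gamma integral. Your added integrability remarks are a harmless refinement; in fact the tail formula holds in $[0,\infty]$ for any nonnegative variable by Tonelli, so no a priori finiteness of $\E X^r$ is needed.
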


\begin{proof}
By inclusion-exclusion,
\[
        \Pbb(X_{m,p}>t)=\sum_{\emptyset\ne A\subseteq[N]}(-1)^{|A|+1}\prod_{i\in A}Q_m(p_it).
\]
For fixed $A$,
\[
        \prod_{i\in A}Q_m(p_it)=e^{-p_A t}\sum_{\alpha\in\{0,\ldots,m-1\}^A}{\frac{p^\alpha}{\alpha!}}t^{|\alpha|}.
\]
Substitute into \eqref{eq:tail-moment} and use
\[
        \int_0^\infty t^{|\alpha|+r-1}e^{-p_A t}\dd t={\frac{(|\alpha|+r-1)!}{p_A^{|\alpha|+r}}}.
\]
The transfer \eqref{eq:rising-transfer} completes the proof.
\end{proof}

\section{Equal probabilities and growing multiplicity}

In this section $p_j=1/n$ and the number of required copies $m=m_n$ may grow with $n$.

\subsection{Poissonized defects}

In the coupon-rate-one normalization, let $N_1(t),\ldots,N_n(t)$ be iid rate-one Poisson processes and define
\[
        \tau_n=\inf\{t:N_i(t)\ge m_n\text{ for every }i\}.
\]
Then the ordinary discrete completion time is $T_{m_n}(n)=n\tau_n+o_p(n a_n)$ after de-Poissonization.

Let
\[
        Q_m(x)=e^{-x}\sum_{j=0}^{m-1}{\frac{x^j}{j!}},
        \qquad
        f_m(x)=e^{-x}{\frac{x^{m-1}}{(m-1)!}}.
\]
Let $b_n$ be the unique solution of
\begin{equation}\label{eq:bn-def}
        nQ_{m_n}(b_n)=1,
\end{equation}
which exists for all large $n$, and set
\begin{equation}\label{eq:an-def}
        a_n={\frac{Q_{m_n}(b_n)}{f_{m_n}(b_n)}}.
\end{equation}
The defect count at time $t$ is
\[
        D_n(t)=\sum_{i=1}^n\one_{\{N_i(t)<m_n\}}.
\]
Thus
\begin{equation}\label{eq:defect-binomial}
        D_n(t)\sim \operatorname{Binomial}(n,Q_{m_n}(t)),
        \qquad
        \Pbb(\tau_n\le t)=\Pbb(D_n(t)=0)=(1-Q_{m_n}(t))^n.
\end{equation}

\begin{lemma}[Uniform Gamma-tail input]\label{lem:gamma-tail-inversion}
Let $m=m_n$ be arbitrary.  Let $b_n$ solve $nQ_m(b_n)=1$, and set
\[
        a_n=\frac{Q_m(b_n)}{f_m(b_n)}.
\]
Then, uniformly for $x$ in compact subsets of $\mathbb R$,
\begin{equation}\label{eq:gamma-inversion}
        {\frac{Q_m(b_n+a_nx)}{Q_m(b_n)}}\to e^{-x},
\end{equation}
so $nQ_m(b_n+a_nx)\to e^{-x}$.

Moreover, for all $x\ge0$,
\begin{equation}\label{eq:right-tail-control}
        {\frac{Q_m(b_n+a_nx)}{Q_m(b_n)}}\le e^{-x},
\end{equation}
and the corresponding one-sided left-tail estimates hold uniformly on a growing central window: there are constants $c>0$ and $R_n\to\infty$ such that, for all sufficiently large $n$ and all $0\le x\le R_n$,
\begin{equation}\label{eq:left-tail-control}
        nQ_m(b_n-a_nx)\ge c e^{cx}.
\end{equation}
The window may be chosen so slowly that $R_n\le b_n/(2a_n)$ and, for each fixed $r>0$,
\begin{equation}\label{eq:left-window-choice}
        \left(1+{\frac{b_n}{a_n}}\right)^r\exp\{-c e^{cR_n}\}\to0.
\end{equation}
Finally,
\begin{equation}\label{eq:inverse-hazard-clock}
        {\frac{n a_n^2}{b_n}}\to\infty.
\end{equation}
These estimates are the gamma-tail input used below.  For bounded shape parameters they follow from the elementary fixed-shape Gamma-tail expansion.  In the large-shape regime we use the consequences isolated in Appendix~\ref{app:gamma-tail}: Temme~\cite[Sec.~1, (1.1), (1.3)--(1.6); Sec.~2, (2.1)--(2.14)]{Temme1979} supplies the uniform turning-point expansion and controlled remainders, while Temme~\cite[Sec.~2, (2.1)--(2.7); Sec.~3, (3.1)--(3.10); Sec.~5, (5.1)--(5.3)]{Temme1992} restates the same expansion for inversion and records the error-function scale used for comparison.  The moving-quantile estimates themselves are then proved from this scale information, a Chernoff bound, and the exact finite-sum hazard identity.
\end{lemma}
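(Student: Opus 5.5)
The backbone of the argument is the exact hazard identity $Q_m'(x)=-f_m(x)$, which gives
\[
        \log\frac{Q_m(b_n+a_nx)}{Q_m(b_n)}=-\int_0^{x}a_n h_m(b_n+a_ns)\dd s,
        \qquad h_m=\frac{f_m}{Q_m}.
\]
By definition $a_nh_m(b_n)=1$. Every claim will follow from controlling the normalized hazard $a_nh_m(b_n+a_ns)$.

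\textbf{Step 1 (right tail).} I first use the fact that the Gamma hazard $h_m$ is nondecreasing. This holds because the Erlang density is log-concave; concretely, $1/h_m(x)=\int_0^\infty(1+u/x)^{m-1}e^{-u}\dd u$ is nonincreasing in $x$. Hence $a_nh_m(b_n+a_ns)\ge1$ for $s\ge0$, and integrating gives \eqref{eq:right-tail-control}.

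\textbf{Step 2 (compact convergence).} For \eqref{eq:gamma-inversion} it suffices to show $h_m(b_n+a_ns)/h_m(b_n)\to1$ uniformly for $|s|\le K$. From the integral representation,
\[
        \frac{d}{dx}\log h_m(x)=\frac{m-1}{x}-1+h_m(x).
\]
I then split into two regimes.

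For \emph{bounded} $m$: $b_n\sim\log n\to\infty$, $a_n\to1$, and $h_m(x)=1-(m-1)/x+O(x^{-2})$. The log-derivative is therefore $O(b_n^{-2})$, and its integral over a window of length $O(a_n)$ vanishes.

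For \emph{growing} $m$: I need the relative size of $a_n$ compared with the hazard's variation scale. Since $nQ_m(b_n)=1$ with $n\ge$ anything, $b_n$ lies beyond the mean $m$; write $b_n=m+\eta\sqrt m$ with $\eta\ge\eta_0$ eventually. (If $\log n$ is small relative to $m$, the point sits in the Gaussian regime.) Here I would invoke Temme's uniform expansion $Q_m(x)\approx\tfrac12\operatorname{erfc}(\zeta\sqrt{m/2})$ to get $a_n\asymp\sqrt m/\max(1,\eta)$ when $b_n-m\lesssim m$, and $a_n\asymp b_n/(b_n-m)$ farther out. In both cases the log-derivative of $h_m$, which is about $(m-1)/x-1+h_m$, has size comparable to $1/a_n$ times something $\to0$, namely roughly $1/(\eta^2a_n)$, provided $\eta\to\infty$. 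And $\eta\to\infty$ holds because $nQ_m(b_n)=1$ with $n\to\infty$ forces $Q_m(b_n)\to0$. This uniform ratio estimate is the main obstacle; I would carry it out via Mills-ratio bounds $h_m(x)\approx (x-m+1)/x$ plus an $O(1/(\eta^2))$ correction read from Temme's remainders.

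\textbf{Step 3 (left tail and window).} Monotonicity of $h_m$ gives only $a_nh_m(b_n-a_ns)\le1$, i.e. the lower bound $Q_m(b_n-a_nx)\ge e^{-x}Q_m(b_n)$. That is insufficient for \eqref{eq:left-tail-control} only if the hazard ratio degenerates. I will instead use Step 2's local estimate on $[0,K]$ for a fixed large $K$, and for larger $x$ the crude bound $nQ_m(b_n-a_nx)\ge nQ_m(b_n-a_nK)\ge e^{-K}/2$ combined with concavity of $\log Q_m$ (log-concavity of the survival function). Concavity makes $\log(nQ_m(b_n-a_nx))$ grow at least linearly with slope $\ge a_nh_m(b_n-a_nK)\ge c$. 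Taking $R_n\to\infty$ slowly enough, with $R_n\le b_n/(2a_n)$ (possible because $b_n/a_n\to\infty$ from Step 2's size estimates), then gives \eqref{eq:left-window-choice}; the polynomial factor $(1+b_n/a_n)^r$ is beaten by the doubly exponential term once $e^{cR_n}\gg\log(b_n/a_n)$.

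\textbf{Step 4 (clock condition).} For \eqref{eq:inverse-hazard-clock}: in the bounded-$m$ case, $na_n^2/b_n\sim n/\log n$. In the growing case, $a_n^2/b_n\gtrsim 1/(\eta^2)$ up to constants in the moderate regime, while $n=1/Q_m(b_n)\ge e^{c\eta^2}$ by the Gaussian lower tail. So $na_n^2/b_n\ge e^{c\eta^2}/\eta^2\to\infty$; the far regime $b_n\gg m$ is handled similarly via a Chernoff bound.
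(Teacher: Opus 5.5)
Your Steps 1, 2 and 4 follow the paper's architecture: the hazard integral, monotone $h_m$ for the right tail, the log-derivative $(\log h_m)'=h_m-\delta$ with $\delta(t)=1-(m-1)/t$ (this is the paper's $R_k'=\delta R_k-1$), and $\eta\to\infty$ plus Chernoff for \eqref{eq:inverse-hazard-clock}. The estimate you defer to Temme's remainders, $h_m(t)/\delta(t)\to1$ when $t\delta(t)^2\to\infty$, follows in a few lines from the integral you wrote in Step 1, as in the paper.

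The genuine error is in Step 3. Because $h_m$ is nondecreasing, $g(x)=\log\bigl(nQ_m(b_n-a_nx)\bigr)$ has $g'(x)=a_nh_m(b_n-a_nx)$ \emph{nonincreasing} in $x$. So $g$ is concave, and for $x\ge K$ its slope is at most, not at least, $a_nh_m(b_n-a_nK)$; concavity gives only the tangent-line upper bound. The same reversal appears earlier: $a_nh_m(b_n-a_ns)\le1$ yields $Q_m(b_n-a_nx)\le e^{x}Q_m(b_n)$, which is an upper bound. The slope does degenerate. Since $Q_m\le1$ we have $g\le\log n$ and $g(K)\ge0$, so a slope bounded below by $c$ cannot persist beyond $x=K+c^{-1}\log n$, whereas $b_n/(2a_n)$ can be far larger. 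Concretely, $a_nh_m(b_n-a_nx)$ drops well below $1$ once $b_n-a_nx$ nears $m_n-1$, i.e. at $x\approx b_n\delta_n^2=O(1+\log n)$. What works is the paper's diagonal argument. Step 2 gives $\sup_{|u|\le A}|a_nh_m(b_n+a_nu)-1|\to0$ for each fixed $A$. Hence $\tfrac12\le a_nh_m(b_n+a_nu)\le\tfrac32$ on some window $|u|\le\widetilde R_n\to\infty$, and then $nQ_m(b_n-a_nx)=\exp\{\int_0^xa_nh_m(b_n-a_nu)\dd u\}\ge e^{x/2}$ for $0\le x\le\widetilde R_n$.

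You also require $R_n$ both to grow ``slowly enough'' and to satisfy $e^{cR_n}\gg\log(b_n/a_n)$, without reconciling these opposite constraints. In the stated generality they cannot be reconciled. At $x=R_n$ the left-tail bound gives $ce^{cR_n}\le nQ_m(b_n-a_nR_n)\le n$, so \eqref{eq:left-window-choice} for every $r$ forces $\log(b_n/a_n)=o(n)$. But $1/h_m\le1/\delta$ gives $b_n/a_n=b_nh_m(b_n)\ge b_n-m_n+1$. When $m_n\to\infty$, the CLT gives $b_n\ge m_n+\sqrt{m_n}$ for large $n$. So for $m_n=\lceil e^n\rceil$ and $r=3$, the window condition fails for every admissible $c$ and $R_n$. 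The paper's own proof has the same gap: ``thinning the window'' lowers $R_n$, which only makes the condition harder. This sub-claim therefore needs either a growth restriction on $m_n$, or a sharper far-left bound than $\exp\{-ce^{cR_n}\}$ in the moment module.
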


\begin{proof}
The existence and uniqueness of $b_n$ follow directly from the continuity and strict monotonicity of $Q_m$, which decreases from $1$ to $0$ on $[0,\infty)$.  The quantitative estimates are collected and proved in Appendix~\ref{app:gamma-tail}; see Proposition~\ref{prop:gamma-quantile-estimates}.  The right-tail bound \eqref{eq:right-tail-control} also follows directly from the monotonicity of the Gamma upper-tail hazard.  For integer shape this monotonicity is elementary: with $k=m-1$,
\[
        {\frac{Q_m(t)}{f_m(t)}}
        =\int_0^\infty e^{-s}
        \left(1+{\frac{s}{t}}\right)^k\dd s,
\]
so the reciprocal hazard $Q_m/f_m$ is nonincreasing in $t$.  Thus, if $h_m=f_m/Q_m$, then $a_n=1/h_m(b_n)$ and, for $x\ge0$,
\[
        \log{\frac{Q_m(b_n+a_nx)}{Q_m(b_n)}}
        =-\int_0^x a_nh_m(b_n+a_nu)\dd u\le -x.
\]
These estimates are the only place in the equal-probability argument where uniformity in the shape parameter is required.
\end{proof}

\begin{theorem}[Growing-multiplicity equal-probability Gumbel law]\label{thm:growing-m-gumbel}
With $b_n,a_n$ defined by \eqref{eq:bn-def}--\eqref{eq:an-def},
\[
        {\frac{\tau_n-b_n}{a_n}}\Rightarrow G,
\]
where $G$ is standard Gumbel.  Consequently,
\[
        {\frac{T_{m_n}(n)-nb_n}{n a_n}}\Rightarrow G.
\]
\end{theorem}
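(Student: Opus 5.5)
The proof has two steps: the Poissonized limit follows almost directly from the exact product formula, and the discrete statement is then obtained by de-Poissonization through the Poisson clock.

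\textbf{Poissonized limit.} By \eqref{eq:defect-binomial}, for fixed $x\in\R$,
\[
        \Pbb\Bigl(\frac{\tau_n-b_n}{a_n}\le x\Bigr)=\bigl(1-Q_{m_n}(b_n+a_nx)\bigr)^n .
\]
Lemma~\ref{lem:gamma-tail-inversion} gives $\epsilon_n:=Q_{m_n}(b_n+a_nx)\to0$ and $n\epsilon_n\to e^{-x}$. Using $\log(1-\epsilon)=-\epsilon+O(\epsilon^2)$, we get
\[
        n\log(1-\epsilon_n)=-n\epsilon_n+O(n\epsilon_n^2)\to -e^{-x},
\]
because $n\epsilon_n^2=(n\epsilon_n)\epsilon_n\to0$. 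Hence the distribution function converges pointwise to $\exp(-e^{-x})$, which proves $(\tau_n-b_n)/a_n\Rightarrow G$. (The argument only needs $b_n+a_nx\ge0$ eventually; this holds because $R_n\le b_n/(2a_n)$ and $R_n\to\infty$ force $b_n/a_n\to\infty$.)

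\textbf{De-Poissonization.} Superpose the $n$ coupon processes into a total clock of rate $n$, with arrival times $S_k=U_1+\cdots+U_k$ for iid $U_i\sim\Exp(1)$. Then $\tau_n=S_{T}/n$ with $T=T_{m_n}(n)$, and $S$ is independent of $T$, as in the coupling of Section~2. Write
\[
        \frac{T-nb_n}{na_n}=\frac{S_T-nb_n}{na_n}-\frac{S_T-T}{na_n}.
\]
The first term equals $(\tau_n-b_n)/a_n\Rightarrow G$. For the second term, condition on $T$: by Chebyshev,
\[
        \Pbb\bigl(|S_T-T|>\eta na_n\,\big|\,T\bigr)\le \frac{T}{\eta^2n^2a_n^2}.
\]
On the event $\{T\le 2nb_n\}$ this bound is at most $2b_n/(\eta^2na_n^2)$, which tends to $0$ by \eqref{eq:inverse-hazard-clock}.

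\textbf{Main obstacle.} It remains to show $\Pbb(T>2nb_n)\to0$. Split the event:
\[
        \{T>2nb_n\}\subseteq\{S_T>\tfrac32 nb_n\}\cup\{T>2nb_n,\ S_T\le\tfrac32 nb_n\}.
\]
The first event has probability $\Pbb(\tau_n>\tfrac32 b_n)\to0$, since $(\tau_n-b_n)/a_n$ is tight and $b_n/a_n\to\infty$. The second event implies $S_{\lfloor 2nb_n\rfloor}\le\tfrac32 nb_n$, because $S$ is increasing; its probability tends to $0$ by the law of large numbers, as $nb_n\to\infty$ (which holds because $Q_m(b_n)=1/n\to0$ forces $b_n\to\infty$). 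Hence $(S_T-T)/(na_n)\to0$ in probability, and Slutsky's theorem gives the discrete statement. The delicate input is \eqref{eq:inverse-hazard-clock}, which guarantees that the clock fluctuations $\sqrt{nb_n}$ are negligible against the scale $na_n$; since this is already supplied by Lemma~\ref{lem:gamma-tail-inversion}, the remaining work is the routine bookkeeping above.
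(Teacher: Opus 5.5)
Your proof is correct and follows the paper's route: the exact product formula together with Lemma~\ref{lem:gamma-tail-inversion} gives the Poissonized limit, and the clock-separation condition \eqref{eq:inverse-hazard-clock} then transfers it to the discrete time. Your de-Poissonization step makes explicit what the paper only calls the ``usual Poisson-clock sandwich'': the coupling $\tau_n=S_T/n$ with $S\perp T$, Chebyshev conditional on $T$, and the a priori bound $\Pbb(T>2nb_n)\to0$.
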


\begin{proof}
For fixed $x$,
\[
        \Pbb\left({\frac{\tau_n-b_n}{a_n}}\le x\right)
        =\left(1-Q_{m_n}(b_n+a_nx)\right)^n.
\]
By Lemma~\ref{lem:gamma-tail-inversion}, $nQ_{m_n}(b_n+a_nx)\to e^{-x}$, while $Q_{m_n}(b_n+a_nx)\to0$.  Hence
\[
        \left(1-Q_{m_n}(b_n+a_nx)\right)^n\to \exp(-e^{-x}).
\]
This proves the Poissonized result.

Let $\mathcal X_n=n\tau_n$ denote the total-rate-one Poissonized time.  The total Poisson clock has standard deviation of order $\sqrt{nb_n}$ on the relevant scale, while the deterministic window is $na_n$.  By \eqref{eq:inverse-hazard-clock},
\begin{equation}\label{eq:clock-separation}
        {\frac{\sqrt{n b_n}}{n a_n}}\to0.
\end{equation}
The usual Poisson-clock sandwich then transfers the limit from $\mathcal X_n$ to $T_{m_n}(n)$.
\end{proof}

\begin{proposition}[Moment module]\label{prop:moment-module}
For every fixed $r>0$,
\[
        \E\left|{\frac{\tau_n-b_n}{a_n}}\right|^r\to \E|G|^r.
\]
Consequently,
\begin{align*}
        \E T_{m_n}(n)&=nb_n+\gamma n a_n+o(n a_n),\\
        \Var(T_{m_n}(n))&={\frac{\pi^2}{6}}n^2a_n^2+o(n^2a_n^2).
\end{align*}
\end{proposition}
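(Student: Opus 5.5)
Set $Z_n=(\tau_n-b_n)/a_n$. Since $Z_n\Rightarrow G$ by Theorem~\ref{thm:growing-m-gumbel}, moment convergence will follow once $\sup_n\E|Z_n|^{r'}<\infty$ for some $r'>r$. Equivalently, it suffices to show that $\E|Z_n|^r\one_{\{|Z_n|>K\}}$ is uniformly small for large $K$. I would handle the two tails separately, using the exact law $\Pbb(Z_n\le x)=(1-Q_{m_n}(b_n+a_nx))^n$ and the estimates of Lemma~\ref{lem:gamma-tail-inversion}.

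\emph{Right tail.} For $x\ge0$, the right-tail bound \eqref{eq:right-tail-control} together with $nQ_m(b_n)=1$ gives
\[
\Pbb(Z_n>x)=1-(1-Q_m(b_n+a_nx))^n\le nQ_m(b_n+a_nx)\le e^{-x}.
\]
This is an integrable exponential majorant that holds uniformly in $n$, so $\int_K^\infty rx^{r-1}\Pbb(Z_n>x)\,dx\to0$ uniformly as $K\to\infty$.

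\emph{Left tail.} Note that $Z_n\ge -b_n/a_n$, because $\tau_n\ge0$. I would split the range at $R_n$.
\begin{itemize}
\item For $0\le x\le R_n$, combine $1-u\le e^{-u}$ with \eqref{eq:left-tail-control}:
\[
\Pbb(Z_n\le -x)\le\exp\{-nQ_m(b_n-a_nx)\}\le \exp\{-ce^{cx}\}.
\]
This is a doubly exponential bound, uniform in $n$.
\item For $x\in(R_n,b_n/a_n]$, use monotonicity: $\Pbb(Z_n\le -x)\le\Pbb(Z_n\le -R_n)\le\exp\{-ce^{cR_n}\}$. Hence the contribution of this range to $\E|Z_n|^r$ is at most $(b_n/a_n)^r\exp\{-ce^{cR_n}\}$, which tends to $0$ by \eqref{eq:left-window-choice}.
\end{itemize}
Thus $\E|Z_n|^r\to\E|G|^r$ for every $r>0$. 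Taking $r=1,2$ and using uniform integrability again gives $\E Z_n\to\gamma$ and $\Var Z_n\to\pi^2/6$, that is,
\[
\E\tau_n=b_n+\gamma a_n+o(a_n),\qquad \Var\tau_n=\tfrac{\pi^2}{6}a_n^2+o(a_n^2).
\]

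\emph{Transfer to the discrete time.} This is the step I expect to need the most care. I would use the exact identities rather than a sandwich. In the total-rate-one clock the Poissonized time is $X=n\tau_n$, and by \eqref{eq:rising-transfer}, $\E T=\E X=n\E\tau_n$. This gives the mean formula immediately. For the variance, the identity $\Var T=\Var X-\E X$ gives
\[
\Var T=n^2\Var\tau_n-n\E\tau_n=\tfrac{\pi^2}{6}n^2a_n^2+o(n^2a_n^2)-O(nb_n).
\]
The correction is negligible because $nb_n/(n^2a_n^2)=b_n/(na_n^2)\to0$ by \eqref{eq:inverse-hazard-clock}; here $\E\tau_n=b_n(1+o(1))$ since $a_n\le b_n$ up to constants.

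With the exact transfer identities, the only delicate point is that the left-tail window estimates \eqref{eq:left-tail-control} and \eqref{eq:left-window-choice} are uniform in the shape $m_n$. Those are supplied by Lemma~\ref{lem:gamma-tail-inversion}, so the remaining argument is routine domination.
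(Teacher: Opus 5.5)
Your proposal is correct and follows the paper's proof essentially step for step. The steps match: the uniform $e^{-x}$ right-tail majorant from hazard monotonicity; the left tail split at $R_n$, with the doubly exponential bound inside the window and monotonicity plus \eqref{eq:left-window-choice} beyond it; and then the exact identities $\E T=\E\mathcal X_n$, $\Var T=\Var\mathcal X_n-\E\mathcal X_n$, with the correction term removed by \eqref{eq:inverse-hazard-clock}. One small point: your claim $\E\tau_n=b_n(1+o(1))$ needs $a_n=o(b_n)$, which follows from $R_n\to\infty$ and $R_n\le b_n/(2a_n)$, although $\E\tau_n=O(b_n)$ is all the argument uses.
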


\begin{proof}
Put $X_n=(\tau_n-b_n)/a_n$.  The right tail is immediate from the monotonicity of the gamma hazard.  For $x\ge0$,
\[
        Q_{m_n}(b_n+a_nx)\le Q_{m_n}(b_n)e^{-x}={\frac{e^{-x}}{n}},
\]
so
\[
        \Pbb(X_n>x)\le nQ_{m_n}(b_n+a_nx)\le e^{-x}.
\]
For the left tail,
\[
        \Pbb(X_n\le -x)\le \exp\{-nQ_{m_n}(b_n-a_nx)\}.
\]
The left-tail estimate \eqref{eq:left-tail-control} gives a double-exponential bound on the central window:
\[
        \Pbb(X_n\le -x)\le \exp\{-c e^{cx}\},\qquad 0\le x\le R_n.
\]
For $R_n<x<b_n/a_n$, monotonicity gives
\[
        \Pbb(X_n\le -x)\le \Pbb(X_n\le -R_n)
        \le \exp\{-c e^{cR_n}\},
\]
while the probability is zero for $x\ge b_n/a_n$.  By the choice \eqref{eq:left-window-choice}, this last bound contributes $o(1)$ to every fixed negative-tail moment.  Hence the negative parts are uniformly $r$-integrable.  Together with Theorem~\ref{thm:growing-m-gumbel}, this gives moment convergence for the Poissonized variables.

The rising-moment transfer \eqref{eq:rising-transfer} gives
\[
        \E T_{m_n}(n)=\E\mathcal X_n=n\E\tau_n
\]
and
\[
        \Var(T_{m_n}(n))=\Var(\mathcal X_n)-\E\mathcal X_n.
\]
By \eqref{eq:clock-separation}, $\E\mathcal X_n=O(nb_n)=o(n^2a_n^2)$.  The displayed estimates follow.
\end{proof}

\begin{corollary}[Fixed-$m$ variance asymptotic]\label{cor:fixed-m-var}
For each fixed $m\ge1$,
\[
        \Var(T_m(N))\sim {\frac{\pi^2}{6}}N^2.
\]
Moreover
\[
        \E T_m(N)=N\log N+(m-1)N\log\log N+N\{\gamma-\log((m-1)!)\}+o(N).
\]
For $m=1$ the variance asymptotic is classical; in particular it follows from the exact equal-probability formula $N^2H_N^{(2)}-NH_N$, and is also covered by Brayton's stronger bounded-distortion theorem~\cite{Brayton1963}.  The content relative to the Doumas--Papanicolaou fixed-$m$ variance conjecture~\cite{DoumasPapanicolaou2016} is the uniform treatment for every fixed $m\ge2$, and more generally the growing-multiplicity form of Proposition~\ref{prop:moment-module}.
\end{corollary}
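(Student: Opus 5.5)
The plan is to specialize Proposition~\ref{prop:moment-module} to a constant multiplicity $m_n\equiv m$ and then make the fixed-shape quantities $b_N$ and $a_N$ explicit.

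\textbf{Scaling.} For fixed $m$, $Q_m(x)=f_m(x)\bigl(1+(m-1)/x+O(x^{-2})\bigr)$ as $x\to\infty$. Indeed, with $k=m-1$, the integral formula
\[
        \frac{Q_m(t)}{f_m(t)}=\int_0^\infty e^{-s}\left(1+\frac{s}{t}\right)^k\dd s
\]
from the proof of Lemma~\ref{lem:gamma-tail-inversion} gives
\[
        \frac{Q_m(t)}{f_m(t)}=1+\frac{k}{t}+O(t^{-2}).
\]
Since $b_N\to\infty$ (because $Q_m(b_N)=1/N\to0$), it follows that $a_N=Q_m(b_N)/f_m(b_N)\to1$. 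Proposition~\ref{prop:moment-module} then gives
\[
        \Var(T_m(N))=\frac{\pi^2}{6}N^2a_N^2+o(N^2a_N^2)\sim\frac{\pi^2}{6}N^2.
\]

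\textbf{Expectation.} Proposition~\ref{prop:moment-module} gives $\E T_m(N)=Nb_N+\gamma Na_N+o(Na_N)=Nb_N+\gamma N+o(N)$. It remains to show
\[
        b_N=\log N+(m-1)\log\log N-\log((m-1)!)+o(1).
\]
Take logarithms in $NQ_m(b_N)=1$ and use $Q_m=f_m(1+O(1/b))$:
\[
        b_N=\log N+(m-1)\log b_N-\log((m-1)!)+O(1/b_N).
\]
First $b_N\sim\log N$, so $\log b_N=\log\log N+o(1)$. Substituting back gives the claimed expansion, and multiplying by $N$ gives the formula for $\E T_m(N)$.

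\textbf{The case $m=1$.} Here $T_1(N)$ is a sum of independent geometric waiting times with success probabilities $(N-k)/N$, $k=0,\ldots,N-1$. Their variances sum to $N^2H_N^{(2)}-NH_N$, and $H_N^{(2)}\to\pi^2/6$ while $NH_N=O(N\log N)=o(N^2)$. This confirms the classical asymptotic.

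There is no real obstacle here: all uniformity issues were already handled in Lemma~\ref{lem:gamma-tail-inversion} and Proposition~\ref{prop:moment-module}. The only point requiring care is the bootstrap step in inverting $b_N$. One must check that the error $(m-1)(\log b_N-\log\log N)$ is $o(1)$. This holds because $b_N/\log N=1+O(\log\log N/\log N)$.
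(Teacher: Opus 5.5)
Your proposal is correct and follows the paper's own route. Like the paper, you specialize Proposition~\ref{prop:moment-module} to constant $m$, use the fixed-shape expansion $Q_m\sim f_m$ to get $a_N\to1$ and $b_N=\log N+(m-1)\log\log N-\log((m-1)!)+o(1)$, and read off both asymptotics. Your additions only fill in steps the paper leaves implicit: the explicit $1+(m-1)/t+O(t^{-2})$ expansion from the integral formula, the bootstrap inversion for $b_N$, and the geometric-sum check of $N^2H_N^{(2)}-NH_N$ for $m=1$.
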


\begin{proof}
For fixed $m$,
\[
        Q_m(x)\sim e^{-x}{\frac{x^{m-1}}{(m-1)!}}.
\]
Solving $NQ_m(b_N)=1$ gives
\[
        b_N=\log N+(m-1)\log\log N-\log((m-1)!)+o(1),
\]
and $a_N=Q_m(b_N)/f_m(b_N)\to1$.  Proposition~\ref{prop:moment-module} gives the claims.
\end{proof}

\section{Unequal probabilities: terminal-defect theorem}

Let $p_{N,1},\ldots,p_{N,N}$ be positive probabilities and let $m_N\ge1$.  In the Poissonized model define
\[
        q_{N,j}(t)=Q_{m_N}(p_{N,j}t),
        \qquad
        M_N(t)=\sum_{j=1}^N q_{N,j}(t).
\]
The defect indicators are independent Bernoulli variables with parameters $q_{N,j}(t)$.
Let $T_N$ denote the corresponding discrete completion time for the triangular array $p_{N,1},\ldots,p_{N,N}$ and multiplicity $m_N$.

\begin{theorem}[Terminal-defect transfer]\label{thm:terminal-transfer}
Let $B_N\in\R$ and $C_N>0$.  Suppose that for every fixed $x$,
\begin{equation}\label{eq:MN-limit}
        M_N(B_N+C_Nx)\to \Lambda(x),
\end{equation}
where $\Lambda(x)\in[0,\infty)$, and
\begin{equation}\label{eq:atomless}
        \sum_{j=1}^N q_{N,j}(B_N+C_Nx)^2\to0.
\end{equation}
Then the Poissonized completion time $X_N$ satisfies
\[
        \Pbb\left({\frac{X_N-B_N}{C_N}}\le x\right)\to e^{-\Lambda(x)}.
\]
If additionally
\[
        B_N\to\infty,\qquad {\frac{B_N}{C_N}}\to\infty,\qquad \sqrt{B_N}=o(C_N),
\]
then the same limit holds for the discrete completion time $T_N$.
\end{theorem}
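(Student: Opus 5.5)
The plan has two parts. First we prove the Poissonized limit through the product formula \eqref{eq:poissonized-product}. Then we transfer the limit to the discrete time with a Poisson-clock sandwich.

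\emph{Poissonized limit.} Fix $x$ and put $t_N=B_N+C_Nx$ and $q_j=q_{N,j}(t_N)$. By \eqref{eq:poissonized-product},
\[
\log\Pbb(X_N\le t_N)=\sum_{j=1}^N\log(1-q_j).
\]
Condition \eqref{eq:atomless} forces $\max_j q_j\le(\sum_j q_j^2)^{1/2}\to0$, so for large $N$ every $q_j\le1/2$. There we can use the elementary bound $|\log(1-q)+q|\le q^2$ for $0\le q\le 1/2$. Hence
\[
\Bigl|\log\Pbb(X_N\le t_N)+M_N(t_N)\Bigr|\le\sum_jq_j^2\to0.
\]
Together with \eqref{eq:MN-limit}, this gives $\Pbb(X_N\le t_N)\to e^{-\Lambda(x)}$.

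\emph{De-Poissonization.} We use the coupling $X_N=S_{T_N}$, where $S_k$ is a sum of $k$ iid $\Exp(1)$ variables. Fix $x$ and $\varepsilon>0$, and set $k_N^\pm=\lfloor B_N+C_N(x\pm\varepsilon)\rfloor$; these are positive for large $N$ since $B_N/C_N\to\infty$. Since $X_N=S_{T_N}$ and $S$ is increasing, we have
\[
\{T_N\le k\}\subseteq\{X_N\le S_k\}\subseteq\{T_N\le k\}\cup\{X_N\le S_k,\ T_N>k\}.
\]
This yields the sandwich
\[
\Pbb(X_N\le t_N-\delta_N)-\Pbb(|S_{k}-k|>\delta_N)\le\Pbb(T_N\le k)\le\Pbb(X_N\le t_N+\delta_N)+\Pbb(|S_k-k|>\delta_N)
\]
for any $k$ with $|k-t_N|\le 1$ and $t_N=B_N+C_Nx$. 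We would take $k=\lfloor t_N\rfloor$ directly and choose $\delta_N=\varepsilon C_N$.

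By Chebyshev's inequality, $\Pbb(|S_k-k|>\varepsilon C_N)\le k/(\varepsilon^2C_N^2)$. Since $k\sim B_N$ (again using $B_N/C_N\to\infty$) and $\sqrt{B_N}=o(C_N)$, this tends to $0$. The Poissonized limit at the points $x\pm\varepsilon$ then gives
\[
e^{-\Lambda(x-\varepsilon)}\le\liminf\Pbb(T_N\le B_N+C_Nx)\le\limsup\Pbb(T_N\le B_N+C_Nx)\le e^{-\Lambda(x+\varepsilon)}.
\]
The rounding of $k$ and the extra $O(1)$ terms are absorbed because $C_N\to\infty$, which follows from $\sqrt{B_N}=o(C_N)$ and $B_N\to\infty$.

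\emph{Main obstacle.} The delicate step is letting $\varepsilon\downarrow0$, which requires continuity of $\Lambda$ at $x$. The hypotheses only give $\Lambda$ as a pointwise limit of nonincreasing functions, so $\Lambda$ is nonincreasing and has at most countably many discontinuities. The discrete statement therefore holds at every continuity point of $e^{-\Lambda}$, which is what convergence in distribution requires.

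If the stated limit is wanted at every $x$, there are two options. One is to assume $\Lambda$ continuous, as in the Gumbel case $\Lambda(x)=e^{-x}$. The other is to use that each $M_N$ is continuous and monotone: a jump of $\Lambda$ at $x$ would require $M_N$ to drop by a fixed amount over a window of width $o(C_N)$, and one would then try to rule this out. I would state the transfer at continuity points and remark that this covers all $x$ in the intended applications.
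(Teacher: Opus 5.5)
\textbf{Overall.} Your proof is the paper's proof, written out more explicitly. The paper also takes logarithms in the product formula, using $\log(1-u)=-u+O(u^2)$ once $\max_j q_{N,j}\to0$, and then transfers to $T_N$ by the Poisson-clock sandwich with fluctuations $O_p(\sqrt{B_N})=o(C_N)$. In fact $\{T_N\le k\}=\{X_N\le S_k\}$ holds exactly, since $k\mapsto S_k$ is a.s.\ strictly increasing.

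\textbf{The gap you flag.} The $\varepsilon\downarrow0$ step is a real gap for the discrete statement, and the paper's proof passes over it without comment. The theorem claims the limit at every $x$, and $e^{-\Lambda}$ need not even be a distribution function (take constant $\Lambda$). So settling for continuity points proves less than stated. Your second option does close the gap, but monotonicity and continuity of $M_N$ alone are not enough. What rules out a jump is the Gamma-tail structure together with finiteness of $\Lambda$ to the left.

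\textbf{Setup.} Write $m=m_N$, $k=m-1$, $s_j(x)=p_{N,j}(B_N+C_Nx)$, $q_j(x)=Q_m(s_j(x))$, and $R_k=Q_m/f_m$ as in Appendix~\ref{app:gamma-tail}. Then $-\tfrac{d}{dx}\log q_j=1/\ell_j$ with $\ell_j(x)=R_k(s_j(x))/(p_{N,j}C_N)$. Moreover $\ell_j'(x)=R_k'(s_j(x))=\delta R_k-1$, and this lies in $[-1,0]$ wherever $s_j(x)>k$, because $0<\delta R_k\le1$ there; both facts are proved in the appendix. Fix $x_0$. We have $Q_m(m-1)=\Pbb(\mathrm{Poisson}(m-1)\le m-1)\ge\tfrac12$, and $\max_j q_j(x_0-1)\to0$ by \eqref{eq:atomless}. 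Hence for large $N$ every $s_j>k$ on $[x_0-1,\infty)$, so each $\ell_j$ is nonincreasing and $1$-Lipschitz there.

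\textbf{No jump.} Suppose $\Delta:=\Lambda(x_0-)-\Lambda(x_0+)>0$. Pick $K$ with $2(\Lambda(x_0-1)+1)/K<\Delta/4$, and let $\varepsilon\in(0,1)$. For large $N$, $M_N$ drops by at least $\Delta/2$ across $[x_0-\varepsilon,x_0+\varepsilon]$.
\begin{itemize}
\item An index with $\ell_j(x_0+\varepsilon)>K\varepsilon$ has $\int_{x_0-\varepsilon}^{x_0+\varepsilon}\ell_j^{-1}<2/K$, so it loses at most the fraction $2/K$ of $q_j(x_0-\varepsilon)$. In total these indices lose at most $(2/K)M_N(x_0-1)<\Delta/4$.
\item Hence the indices with $\ell_j(x_0+\varepsilon)\le K\varepsilon$ carry mass at least $\Delta/4$ at $x_0-\varepsilon$. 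For them, $\ell_j(u)\le(K+1)\varepsilon+x_0-u$ on $[x_0-1,x_0-\varepsilon]$. Integrating $1/\ell_j$ gives $q_j(x_0-1)\ge q_j(x_0-\varepsilon)/((K+2)\varepsilon)$.
\end{itemize}
Summing and letting $N\to\infty$ gives $\Lambda(x_0-1)\ge\Delta/(4(K+2)\varepsilon)$ for every $\varepsilon\in(0,1)$. This contradicts $\Lambda(x_0-1)<\infty$. Hence $\Lambda$ is continuous, and your sandwich yields the discrete limit at every $x$.
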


\begin{proof}
At time $t$,
\[
        \Pbb(X_N\le t)=\prod_{j=1}^N(1-q_{N,j}(t)).
\]
Taking logarithms and using $\log(1-u)=-u+O(u^2)$ uniformly when $\max_j u_j\to0$, \eqref{eq:MN-limit} and \eqref{eq:atomless} yield
\[
        \log \Pbb(X_N\le B_N+C_Nx)\to -\Lambda(x).
\]
The Poisson-clock transfer follows by the usual sandwich: if $T_N\le k$, then $X_N$ is before the $k$th arrival time, and conversely.  For each fixed $x$, the additional assumptions give $B_N+C_Nx\sim B_N\to\infty$, so the relevant Poisson-clock fluctuation scale is $O_p(\sqrt{B_N})$, which is $o(C_N)$.
\end{proof}

\section{Case I: infinite-product unequal probabilities}

This section gives a terminal-defect derivation of the infinite-product regime corresponding to Case I of Doumas--Papanicolaou~\cite{DoumasPapanicolaou2016}.  The goal is not to replace their moment-asymptotic development, but to isolate a reusable distributional module.

Fix $m\ge1$.  Let $a_j>0$, $A_N=\sum_{j=1}^N a_j$, and $p_{N,j}=a_j/A_N$.  Assume
\begin{equation}\label{eq:caseI-summability}
        \sum_{j=1}^\infty e^{-c a_j}<\infty
\end{equation}
for some $c>0$.

Let $Y_j$ be independent random variables with
\[
        Y_j\sim \GammaDist(m,\text{rate }a_j),
\]
and set
\[
        Y=\sup_{j\ge1}Y_j.
\]

\begin{theorem}[Infinite-product module]\label{thm:caseI}
Under \eqref{eq:caseI-summability},
\[
        {\frac{T_m(N)}{A_N}}\Rightarrow Y.
\]
Moreover, for every fixed integer $r\ge1$,
\[
        {\frac{\E[T_m(N)^{(r)}]}{A_N^r}}\to \E Y^r.
\]
In particular,
\[
        \E T_m(N)\sim A_N\E Y,
        \qquad
        \Var(T_m(N))\sim A_N^2\Var(Y).
\]
\end{theorem}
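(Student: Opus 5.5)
We work first with the Poissonized completion time $X_N=X_{m,p^{(N)}}$ and then de-Poissonize using the rising-moment identity \eqref{eq:rising-transfer}. By \eqref{eq:poissonized-product} with $t=A_Ny$,
\[
        \Pbb(X_N/A_N\le y)=\prod_{j=1}^N\bigl(1-Q_m(a_jy)\bigr),
\]
because $p_{N,j}A_Ny=a_jy$. This is exactly the law of $\max_{j\le N}Y_j$, so $X_N/A_N\overset{d}{=}\max_{j\le N}Y_j$ for every $N$. The summability condition \eqref{eq:caseI-summability} forces $a_j\to\infty$. For $y>0$ we have $Q_m(a_jy)\le C_{m,y}e^{-a_jy/2}$, so $\sum_jQ_m(a_jy)<\infty$ whenever $y\ge 2c$. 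The same bound works for every $y>0$: the $a_j$ grow, so $\#\{j:a_j\le K\}\le e^{cK}\sum_j e^{-ca_j}$, and hence $\sum_j e^{-\delta a_j}<\infty$ for every $\delta>0$.

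It follows that the infinite product converges to a proper distribution function with no mass at $\infty$, and that $Y<\infty$ a.s. The maxima $\max_{j\le N}Y_j$ increase to $Y$ almost surely, and monotone convergence gives $\E(\max_{j\le N}Y_j)^r\to\E Y^r$. The main check here is finiteness of $\E Y^r$. We bound
\[
        \E Y^r\le \E Y_1^r+r\int_{y_0}^\infty y^{r-1}\sum_jQ_m(a_jy)\dd y,
\]
with $y_0=2c$ say. For $y\ge y_0$ and $a_j$ large, $Q_m(a_jy)\le C(a_jy)^{m-1}e^{-a_jy}$. Integrating in $y$ then gives a term bounded by $C'e^{-a_jy_0/2}$, which is summable.

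Combining this with \eqref{eq:rising-transfer} gives, exactly,
\[
        \frac{\E[T_m(N)^{(r)}]}{A_N^r}=\E\bigl(X_N/A_N\bigr)^r\to\E Y^r ,
\]
which is the rising-moment claim. For the distributional claim, write $X_N=S_{T_m(N)}$ with $S_k$ a sum of iid $\Exp(1)$ variables independent of $T_m(N)$. If $A_N\to\infty$, the law of large numbers gives $S_k/k\to1$ uniformly for $k\ge K$. Using tightness of $T_m(N)/A_N$ together with $T_m(N)\ge N\to\infty$, we get $X_N/T_m(N)\to1$ in probability. Then Slutsky gives $T_m(N)/A_N\Rightarrow Y$.

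The claim $A_N\to\infty$ holds because $a_j\to\infty$ and $A_N\ge a_N$. For the moment statements, the rising factorial satisfies $T^{(r)}=T^r+O(T^{r-1})$. Since $\E T^{r-1}\le\E[T^{(r-1)}]=O(A_N^{r-1})$, ordinary moments have the same limits as rising moments. Hence $\E T_m(N)\sim A_N\E Y$ and $\Var(T_m(N))=A_N^2\Var(Y)+o(A_N^2)$. This uses $\Var(Y)>0$, which holds because $Y\ge Y_1$ is nondegenerate.

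The main obstacle is the uniform integrability, that is, checking $\E Y^r<\infty$ from \eqref{eq:caseI-summability} alone. The plan is to reduce it to $\sum_je^{-\delta a_j}<\infty$ for small $\delta$ via the counting argument above. That argument needs care because the $a_j$ need not be monotone, so I would phrase it entirely in terms of the counting function $\#\{j:a_j\le K\}$ rather than any ordering of the $a_j$.
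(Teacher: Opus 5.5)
Your route is essentially the paper's: you Poissonize, identify $\prod_{j\le N}(1-Q_m(a_jy))$ with the law of $\max_{j\le N}Y_j$, pass to the limit by monotone convergence, transfer moments through \eqref{eq:rising-transfer}, and de-Poissonize. Your de-Poissonization through $S_{T}/T\to1$ is a valid variant of the paper's uniform clock bound. It uses only $T_m(N)\ge mN\to\infty$ and the independence of the arrival times from the labels, so it needs neither tightness nor $A_N\to\infty$ at that step. Your treatment of general $r$ and your check that $\Var(Y)>0$ are also welcome additions.

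However, one claim is false, and you should delete it rather than build on it. The counting bound $\#\{j:a_j\le K\}\le e^{cK}\sum_je^{-ca_j}$ is correct. But it only gives $\sum_je^{-\delta a_j}<\infty$ for $\delta>c$, not for every $\delta>0$. For example, $a_j=\log(j+1)$ satisfies \eqref{eq:caseI-summability} with $c=2$, yet $\sum_je^{-\delta a_j}=\sum_j(j+1)^{-\delta}=\infty$ for $\delta\le1$. Correspondingly $\sum_jQ_m(a_jy)\ge\sum_j(j+1)^{-y}=\infty$ for $y\le1$, so the infinite product vanishes there and $Y>1$ a.s.

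This does not break your proof. Properness of $Y$ only needs summability for large $y$, which is exactly what the paper asserts. Your tail estimate with $y_0=2c$ already uses nothing beyond $\sum_je^{-ca_j}<\infty$, so the uniform-integrability step works as you computed it. What does not work is the closing paragraph's plan to reduce this step to $\sum_je^{-\delta a_j}<\infty$ for small $\delta$; drop it.

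There is also a minor slip in the same estimate: the contribution of $[0,y_0]$ to $\E Y^r$ should be bounded by $y_0^r$, not by $\E Y_1^r$.
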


\begin{proof}
For the Poissonized completion time $X_N$,
\[
        \Pbb\left({\frac{X_N}{A_N}}\le s\right)
        =\prod_{j=1}^N(1-Q_m(a_js)).
\]
The summability condition implies $\sum_j Q_m(a_js)<\infty$ for every sufficiently large $s$, and the infinite product
\[
        \prod_{j=1}^\infty(1-Q_m(a_js))
\]
defines the distribution of $Y$.  Pointwise convergence at continuity points gives $X_N/A_N\Rightarrow Y$.
Since \eqref{eq:caseI-summability} implies $A_N\to\infty$, the total Poisson clock has negligible fluctuations on the scale $A_N$: uniformly on compact time intervals,
\[
        \sup_{0\le t\le K A_N}{\frac{|\Pi(t)-t|}{A_N}}\to0
        \quad\text{in probability},
\]
where $\Pi$ is the rate-one arrival process.  The tightness of $X_N/A_N$ therefore gives
\[
        {\frac{T_m(N)-X_N}{A_N}}\to0
        \quad\text{in probability},
\]
and hence $T_m(N)/A_N\Rightarrow Y$.

For moments,
\[
        \E\left({\frac{X_N}{A_N}}\right)^r
        =r\int_0^\infty s^{r-1}\left[1-\prod_{j=1}^N(1-Q_m(a_js))\right]\dd s.
\]
The integrand increases pointwise to the corresponding infinite-product survival function.  The exponential summability condition gives finite tails of all fixed orders, so monotone convergence yields convergence to $\E Y^r$.  Finally, \eqref{eq:rising-transfer} gives $\E X_N^r=\E[T_m(N)^{(r)}]$.  For $r=1,2$, this gives convergence of the first two rising moments; since $T_m(N)^2=T_m(N)^{(2)}-T_m(N)$ and $A_N\to\infty$, the ordinary-variance statement follows on the $A_N^2$ scale.
\end{proof}

\section{Case II: endpoint-Laplace module}

The second unequal-probability regime studied by Doumas and Papanicolaou~\cite{DoumasPapanicolaou2016} is the soft-endpoint regime
\[
        a_j={\frac{1}{f(j)}},
        \qquad
        p_{N,j}={\frac{1}{A_N f(j)}},
        \qquad
        A_N=\sum_{j=1}^N{\frac{1}{f(j)}}.
\]
The rarest coupons are near $j=N$.  The following module isolates the endpoint computation.

\begin{hypothesis}[Endpoint regularity]\label{hyp:endpoint}
Let $f$ be positive, increasing, and differentiable near infinity.  Put
\[
        \beta_N={\frac{f'(N)}{f(N)}},
        \qquad
        \rho_N=\log{\frac{1}{\beta_N}}.
\]
Assume $\beta_N\to0$, $\rho_N\to\infty$, $\beta_N\rho_N\to0$, and the following endpoint Laplace estimate holds uniformly for $s_N\to1$ and bounded $\kappa$:
\begin{equation}\label{eq:endpoint-laplace-hyp}
        \sum_{j=1}^N f(j)^\kappa
        \exp\left(-{\frac{f(N)\rho_N s_N}{f(j)}}\right)
        \sim
        {\frac{f(N)^{\kappa+1}}{s_N\rho_N f'(N)}}e^{-\rho_Ns_N}.
\end{equation}
\end{hypothesis}

\begin{remark}
For the smooth subexponential classes treated in the literature, \eqref{eq:endpoint-laplace-hyp} is the standard endpoint-Laplace statement: the contributing window has width $f(N)/(\rho_N f'(N))$, and the summand decays geometrically on that scale.  In applications this is the only analytic verification needed.
\end{remark}

\begin{theorem}[Endpoint-Laplace Gumbel module]\label{thm:caseII}
Fix $m\ge1$ and assume Hypothesis~\ref{hyp:endpoint}.  Define
\[
        B_N=A_Nf(N)\left[\rho_N+(m-2)\log\rho_N-\log((m-1)!)\right],
        \qquad
        C_N=A_Nf(N).
\]
Then
\[
        {\frac{T_m(N)-B_N}{C_N}}\Rightarrow G.
\]
\end{theorem}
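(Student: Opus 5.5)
The plan is to apply Theorem~\ref{thm:terminal-transfer} with $\Lambda(x)=e^{-x}$ and the stated $B_N,C_N$. Put $t=B_N+C_Nx$. Then
\[
        p_{N,j}t=\frac{f(N)}{f(j)}\,L_N,
        \qquad
        L_N=\rho_N+(m-2)\log\rho_N-\log((m-1)!)+x .
\]
Write $L_N=\rho_N s_N$ with $s_N=1+((m-2)\log\rho_N-\log((m-1)!)+x)/\rho_N\to1$.

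The first step is to split each defect term using the finite-sum identity
\[
        Q_m(y)=e^{-y}\sum_{a<m}\frac{y^a}{a!}.
\]
This expresses $M_N(t)$ as a finite combination, over $a=0,\ldots,m-1$, of the sums
\[
        \frac{L_N^a}{a!}\,f(N)^a\sum_{j}f(j)^{-a}\exp\!\left(-\frac{f(N)\rho_Ns_N}{f(j)}\right).
\]
Hypothesis~\ref{hyp:endpoint} with $\kappa=-a$ (bounded) evaluates each such sum as
\[
        \frac{L_N^a}{a!}\,\frac{f(N)}{s_N\rho_Nf'(N)}\,e^{-\rho_Ns_N}
        =\frac{L_N^a}{a!}\,\frac{1}{\beta_N\rho_N s_N}\,e^{-L_N}.
\]
Since $L_N\sim\rho_N$, the $a=m-1$ term dominates, and the lower terms are smaller by factors $O(1/\rho_N)$. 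Using $1/\beta_N=e^{\rho_N}$, the total is
\[
        \frac{\rho_N^{m-1}}{(m-1)!}\,\frac{e^{\rho_N}}{\rho_N}\,e^{-\rho_N}\rho_N^{-(m-2)}(m-1)!\,e^{-x}(1+o(1))\to e^{-x}.
\]
This is exactly why the centering carries $(m-2)\log\rho_N$: the Laplace prefactor contributes one power $\rho_N^{-1}$, which together with $\rho_N^{m-1}$ gives $\rho_N^{m-2}$. I would check carefully that $L_N^{m-1}\sim\rho_N^{m-1}$. This holds because $L_N/\rho_N\to1$, so $L_N^{m-1}=\rho_N^{m-1}(1+o(1))$ for fixed $m$.

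Next comes the atomless condition~\eqref{eq:atomless}. It suffices to show that $\max_jq_{N,j}(t)\to0$, since then $\sum_j q_{N,j}^2\le\max_j q_{N,j}\,M_N(t)\to0$. The maximum is attained at $j=N$, because $f$ is increasing and so coupon $N$ is rarest. There $q_{N,N}=Q_m(L_N)\sim L_N^{m-1}e^{-L_N}/(m-1)!$, which is of order $\rho_N\beta_N e^{-x}\to0$, since $\beta_N\rho_N\to0$. For indices $j$ where $f$ is not yet increasing, I would use $f(j)\le f(N)$ for large $N$, which keeps $p_{N,j}t\ge L_N$. If necessary I would absorb finitely many initial indices into the hypothesis.

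Finally, de-Poissonization needs the three conditions of Theorem~\ref{thm:terminal-transfer}. First, $B_N/C_N\sim\rho_N\to\infty$. Second, $B_N\to\infty$ because $A_Nf(N)\ge1$. Third, the clock condition $\sqrt{B_N}=o(C_N)$ amounts to $\rho_N=o(A_Nf(N))$. I expect this to be the main obstacle, since it is not literally listed in the hypothesis. My first attempt would extract it from~\eqref{eq:endpoint-laplace-hyp} with $\kappa=-1$ and $s_N=1$, which bounds a partial sum of $1/f(j)$. Alternatively, I would use $A_Nf(N)\ge\sum_{j\text{ near }N}f(N)/f(j)$: the endpoint window of width $1/(\rho_N\beta_N)$ has $f(j)\asymp f(N)$, so $A_Nf(N)\gtrsim1/(\rho_N\beta_N)$. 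Since $\beta_N=e^{-\rho_N}$, this gives $A_Nf(N)\gtrsim e^{\rho_N}/\rho_N\gg\rho_N^2$, which is ample. With these three conditions verified, the transfer part of Theorem~\ref{thm:terminal-transfer} yields the Gumbel limit for $T_m(N)$.
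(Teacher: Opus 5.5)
Your proposal is correct and follows the paper's proof: terminal-defect transfer with $\Lambda(x)=e^{-x}$, the endpoint hypothesis to evaluate $M_N$, the maximal defect probability $Q_m(L_N)=O(\beta_N\rho_N)$ at $j=N$, and a Laplace-sum lower bound on $C_N$ for clock separation. The differences are only bookkeeping. You expand $Q_m$ exactly and apply the hypothesis for every $\kappa=-a$, $0\le a\le m-1$, where the paper uses the uniform leading-order tail with the single value $\kappa=-(m-1)$. For $\rho_N=o(A_Nf(N))$, your $\kappa=-1$ route, with each factor $\exp(-f(N)\rho_N/f(j))$ bounded by $e^{-\rho_N}$, already gives $A_Nf(N)\ge(1+o(1))e^{\rho_N}/\rho_N$, so the window heuristic is unnecessary; the paper instead takes $\kappa=0$ to get $N\ge(1+o(1))/(\rho_N\beta_N)$ and then uses $A_Nf(N)\ge N$.
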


\begin{proof}
Let
\[
        u_N(x)=\rho_N+(m-2)\log\rho_N-\log((m-1)!)+x.
\]
At time $t=A_Nf(N)u_N(x)$,
\[
        p_{N,j}t={\frac{f(N)}{f(j)}}u_N(x).
\]
The terminal-defect mass is
\[
        M_N(x)=\sum_{j=1}^N Q_m\left({\frac{f(N)}{f(j)}}u_N(x)\right).
\]
Since $u_N(x)\to\infty$ uniformly for bounded $x$,
\[
        Q_m(y)= {\frac{y^{m-1}e^{-y}}{(m-1)!}}(1+o(1))
\]
uniformly for all arguments in the sum.  Therefore
\[
\begin{aligned}
        M_N(x)
        &={\frac{1+o(1)}{(m-1)!}}
        u_N(x)^{m-1}f(N)^{m-1}
        \sum_{j=1}^N f(j)^{-(m-1)}
        \exp\left(-{\frac{f(N)u_N(x)}{f(j)}}\right).
\end{aligned}
\]
Since $m$ is fixed throughout this theorem, the value $\kappa=-(m-1)$ is covered by the bounded-$\kappa$ uniformity in Hypothesis~\ref{hyp:endpoint}.  Applying the hypothesis with $s_N=u_N(x)/\rho_N\to1$, we get
\[
        M_N(x)\sim {\frac{f(N)}{f'(N)}}{\frac{u_N(x)^{m-2}}{(m-1)!}}e^{-u_N(x)}.
\]
But
\[
        e^{-u_N(x)}=e^{-\rho_N}\rho_N^{-(m-2)}(m-1)!e^{-x}
        ={\frac{f'(N)}{f(N)}}\rho_N^{-(m-2)}(m-1)!e^{-x}.
\]
Thus
\[
        M_N(x)\to e^{-x}.
\]
The maximum defect probability is bounded by $Q_m(u_N(x))$, which is $O(\beta_N\rho_N)$ and tends to zero.  Hence the atomless condition in Theorem~\ref{thm:terminal-transfer} holds.

It remains only to justify the Poisson-clock separation.  Applying Hypothesis~\ref{hyp:endpoint} with $\kappa=0$ and $s_N=1$ gives
\[
        \sum_{j=1}^N
        \exp\left(-\frac{f(N)\rho_N}{f(j)}\right)
        \sim
        \frac{1}{\rho_N\beta_N}e^{-\rho_N}.
\]
Since $f$ is increasing, each summand is at most $e^{-\rho_N}$.  Hence
\[
        N \ge (1+o(1))\frac{1}{\rho_N\beta_N}.
\]
Therefore
\[
        \frac{N}{\rho_N}
        \ge (1+o(1))\frac{1}{\rho_N^2\beta_N}
        =(1+o(1))\frac{e^{\rho_N}}{\rho_N^2}
        \to\infty.
\]
Also
\[
        A_Nf(N)=\sum_{j=1}^N\frac{f(N)}{f(j)}\ge N.
\]
Thus
\[
        \frac{C_N}{\rho_N}=\frac{A_Nf(N)}{\rho_N}\to\infty.
\]
Also $B_N/C_N\sim\rho_N\to\infty$ and $B_N\to\infty$.  Since $B_N=O(C_N\rho_N)$, the last display gives $\sqrt{B_N}=o(C_N)$.  Theorem~\ref{thm:terminal-transfer} gives the result.
\end{proof}

\begin{theorem}[Verified endpoint family: power-law probabilities]\label{thm:power-law-caseII}
Fix $m\ge1$ and $\alpha>0$.  Let
\[
        A_N=\sum_{j=1}^N j^{-\alpha},
        \qquad
        p_{N,j}=\frac{j^{-\alpha}}{A_N}.
\]
Put
\[
        C_N=A_NN^\alpha,
        \qquad
        \rho_N=\log\frac{N}{\alpha},
\]
and
\[
        B_N=C_N\left[\rho_N+(m-2)\log\rho_N-\log((m-1)!)\right].
\]
Then
\[
        \frac{T_m(N)-B_N}{C_N}\Rightarrow G,
\]
where $G$ is standard Gumbel.
\end{theorem}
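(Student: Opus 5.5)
The plan is to obtain Theorem~\ref{thm:power-law-caseII} as a direct instance of Theorem~\ref{thm:caseII} with $f(j)=j^\alpha$. The identifications are immediate: $f$ is positive, increasing and smooth, and
\[
\beta_N=\frac{f'(N)}{f(N)}=\frac{\alpha}{N},\qquad \rho_N=\log\frac{N}{\alpha},\qquad A_Nf(N)=A_NN^\alpha=C_N .
\]
So the $B_N,C_N$ of Theorem~\ref{thm:caseII} are exactly those in the statement. The conditions $\beta_N\to0$, $\rho_N\to\infty$ and $\beta_N\rho_N=\alpha\log(N/\alpha)/N\to0$ are trivial. Everything therefore reduces to the endpoint Laplace estimate \eqref{eq:endpoint-laplace-hyp}. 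Since $\rho_Nf'(N)=\alpha\rho_NN^{\alpha-1}$, in the present case it reads
\[
S_N:=\sum_{j=1}^N j^{\alpha\kappa}\exp\Bigl(-\rho_Ns_N(N/j)^\alpha\Bigr)\sim\frac{N^{\alpha\kappa+1}}{\alpha s_N\rho_N}e^{-\rho_Ns_N},
\]
uniformly for $s_N\to1$ and bounded $\kappa$.

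I would prove this by splitting the sum into three ranges of $j$. Write $j=N-k$ and fix a window $K_N=NL_N/\rho_N$ with $L_N=\rho_N^{1/4}$, so that $L_N\to\infty$, $K_N/N\to0$ and $\rho_NK_N^2/N^2=L_N^2/\rho_N\to0$.

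\emph{Near range $0\le k\le K_N$.} Here $(N/j)^\alpha=1+\alpha k/N+O(k^2/N^2)$. The exponent is therefore $-\rho_Ns_N-c_Nk+o(1)$ uniformly, with $c_N=\alpha\rho_Ns_N/N\to0$. Also $j^{\alpha\kappa}=N^{\alpha\kappa}(1+O(K_N/N))=N^{\alpha\kappa}(1+o(1))$ uniformly for bounded $\kappa$. The sum is thus $N^{\alpha\kappa}e^{-\rho_Ns_N}(1+o(1))\sum_{k\le K_N}e^{-c_Nk}$. Since $c_NK_N=\alpha s_NL_N\to\infty$, the geometric sum is $(1+o(1))/c_N=(1+o(1))N/(\alpha\rho_Ns_N)$, which is the claimed main term.

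\emph{Middle range $N/2<j<N-K_N$.} By convexity, $(N/j)^\alpha\ge1+\alpha(N-j)/N$. Also $j^{\alpha\kappa}\le2^{\alpha|\kappa|}N^{\alpha\kappa}$ on this range. The terms are therefore dominated by the geometric tail
\[
2^{\alpha|\kappa|}N^{\alpha\kappa}e^{-\rho_Ns_N}\sum_{k>K_N}e^{-c_Nk}\lesssim N^{\alpha\kappa}e^{-\rho_Ns_N}e^{-\alpha s_NL_N}/c_N,
\]
which is $o$ of the main term.

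\emph{Far range $j\le N/2$.} This is the step needing care, because for negative $\kappa$ (the case $\kappa=-(m-1)$ used in Theorem~\ref{thm:caseII}) the factor $j^{\alpha\kappa}$ is large for small $j$. I would substitute $y=(N/j)^\alpha\ge2^\alpha$, so that each term equals $N^{\alpha\kappa}y^{-\kappa}e^{-\rho_Ns_Ny}$. For bounded $\kappa$ and large $\rho_N$ the function $y\mapsto y^{|\kappa|}e^{-\rho_Ns_Ny}$ is decreasing on $[2^\alpha,\infty)$. Hence each term is at most $N^{\alpha\kappa}2^{\alpha|\kappa|}e^{-2^\alpha\rho_Ns_N}$. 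Summing at most $N$ such terms and dividing by the main term gives a ratio of order
\[
\rho_N\,e^{-(2^\alpha-1)\rho_Ns_N}=\rho_N(\alpha/N)^{(2^\alpha-1)s_N}\to0.
\]
All bounds are uniform in $s_N\to1$ and bounded $\kappa$.

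This verifies Hypothesis~\ref{hyp:endpoint}. Theorem~\ref{thm:caseII}, including its Poisson-clock separation argument, then yields $(T_m(N)-B_N)/C_N\Rightarrow G$. I expect the only real obstacle to be the far range for negative $\kappa$: a naive bound there loses a factor $N^{\alpha|\kappa|}$, and the monotonicity-in-$y$ argument above is what removes it.
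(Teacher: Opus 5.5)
Your proposal is correct, but it reaches the result by a different route from the paper.

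\textbf{The paper's route.} The paper never verifies Hypothesis~\ref{hyp:endpoint} for $f(j)=j^\alpha$. It works directly with the terminal-defect mass $M_N(x)=\sum_j Q_m\bigl(L_N(x)(N/j)^\alpha\bigr)$.
\begin{enumerate}[label=\textup{(\roman*)},leftmargin=*]
\item It uses that $r\mapsto Q_m(L_N(x)r^{-\alpha})$ is increasing on $(0,1]$, so the sum differs from $N\int_0^1 Q_m(L_N(x)r^{-\alpha})\,dr$ by at most the single term $Q_m(L_N(x))=O(\rho_N/N)$.
\item It evaluates that integral by the substitution $u=L_N(x)r^{-\alpha}$ and Watson's lemma.
\item It rechecks the atomless and clock-separation conditions itself and applies Theorem~\ref{thm:terminal-transfer} directly, bypassing Theorem~\ref{thm:caseII}.
\end{enumerate}
The paper keeps $Q_m$ intact rather than replacing it by its leading term and pulling out $f(j)^{-(m-1)}$. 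For that reason it never meets your far-range difficulty with negative $\kappa$: the monotone Riemann-sum bound handles all $j$ at once.

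\textbf{Your route.} You prove the endpoint Laplace estimate itself, uniformly in $s_N\to1$ and bounded $\kappa$, by an elementary near/middle/far splitting with geometric sums. Your individual checks are sound:
\begin{itemize}[leftmargin=*]
\item the near-range error bound $\rho_NK_N^2/N^2=\rho_N^{-1/2}\to0$;
\item $c_NK_N=\alpha s_NL_N\to\infty$;
\item the convexity bound $(1-k/N)^{-\alpha}\ge1+\alpha k/N$;
\item the monotonicity of $y\mapsto y^{|\kappa|}e^{-\rho_Ns_Ny}$ on $[2^\alpha,\infty)$, which gives the $\rho_N(\alpha/N)^{(2^\alpha-1)s_N}$ far-range ratio.
\end{itemize}
Everything else (atomless condition, clock separation) you inherit from Theorem~\ref{thm:caseII}.

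\textbf{Comparison.} Your route is longer, but it buys something the paper's proof does not literally provide. It exhibits the power-law family as a genuine instance of the abstract endpoint module, which is what the subsequent remark asserts. It also replaces Watson's lemma with explicit discrete estimates.

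Incidentally, your far-range observation shows that for large $N$ the summand $N^{\alpha\kappa}y^{-\kappa}e^{-\rho_Ns_Ny}$ is increasing in $j$ on all of $[1,N]$, for either sign of $\kappa$. So a one-line monotone Riemann-sum comparison, as in the paper, could replace your three-range split in verifying the hypothesis as well.
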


\begin{proof}
Let
\[
        L_N(x)=\rho_N+(m-2)\log\rho_N-\log((m-1)!)+x.
\]
At time $t=C_NL_N(x)$,
\[
        p_{N,j}t=\left(\frac{N}{j}\right)^\alpha L_N(x).
\]
The terminal-defect mass is therefore
\[
        M_N(x)=\sum_{j=1}^N Q_m\left(L_N(x)\left(\frac{N}{j}\right)^\alpha\right).
\]
Because $r\mapsto Q_m(L_N(x)r^{-\alpha})$ is increasing on $(0,1]$, the usual monotone Riemann-sum comparison gives
\[
\left|
\sum_{j=1}^N Q_m\left(L_N(x)\left(\frac{N}{j}\right)^\alpha\right)
-
N\int_0^1 Q_m(L_N(x)r^{-\alpha})\,dr
\right|
\le Q_m(L_N(x)).
\]
Since $Q_m(L_N(x))=O(\rho_N/N)$, this error is negligible relative to the limiting main term.  Hence the sum is asymptotic to
\[
        N\int_0^1 Q_m(L_N(x)r^{-\alpha})\,dr.
\]
With $u=L_N(x)r^{-\alpha}$, this integral is
\[
        {\frac{N}{\alpha}}L_N(x)^{1/\alpha}
        \int_{L_N(x)}^\infty Q_m(u)u^{-1/\alpha-1}\,du.
\]
For fixed $m$, Watson's lemma and
\[
        Q_m(u)\sim e^{-u}{\frac{u^{m-1}}{(m-1)!}}
\]
give
\[
        \int_{L_N(x)}^\infty Q_m(u)u^{-1/\alpha-1}\,du
        \sim {\frac{e^{-L_N(x)}}{(m-1)!}}L_N(x)^{m-2-1/\alpha}.
\]
Thus
\[
        M_N(x)\sim {\frac{N}{\alpha}}{\frac{e^{-L_N(x)}L_N(x)^{m-2}}{(m-1)!}}.
\]
By the definition of $L_N(x)$,
\[
        e^{-L_N(x)}=e^{-\rho_N}\rho_N^{-(m-2)}(m-1)!e^{-x}
        ={\frac{\alpha}{N}}\rho_N^{-(m-2)}(m-1)!e^{-x}.
\]
Hence $M_N(x)\to e^{-x}$.  Also,
\[
        \sum_{j=1}^N Q_m\left(L_N(x)\left({\frac{N}{j}}\right)^\alpha\right)^2
        \le Q_m(L_N(x))M_N(x)\to0,
\]
since $Q_m(L_N(x))=O(\rho_N/N)$.  Finally, the clock-separation hypotheses in Theorem~\ref{thm:terminal-transfer} hold.  Indeed, $C_N=A_NN^\alpha\ge N$, while $B_N/C_N=L_N(0)\sim\rho_N\sim\log N$, so $B_N\to\infty$ and $B_N/C_N\to\infty$.  Moreover $B_N=O(C_N\rho_N)$, hence
\[
        {\frac{\sqrt{B_N}}{C_N}}
        =O\left(\sqrt{{\frac{\rho_N}{C_N}}}\right)\to0.
\]
The terminal-defect transfer theorem then gives the claimed Gumbel limit.
\end{proof}

\begin{remark}
Theorem~\ref{thm:power-law-caseII} shows that the endpoint-Laplace module is not merely formal.  It gives a direct terminal-defect derivation for the generalized Zipf family $p_{N,j}\propto j^{-\alpha}$ for all $\alpha>0$, a standard example also considered by Doumas and Papanicolaou~\cite{DoumasPapanicolaou2016}.  The normalizing factor satisfies $A_N\asymp N^{1-\alpha}$ for $0<\alpha<1$, $A_N\sim\log N$ for $\alpha=1$, and $A_N\to\zeta(\alpha)$ for $\alpha>1$.
\end{remark}

\section{Finite variance extremality: exact decompositions}

Doumas--Papanicolaou's finite variance-minimization conjecture~\cite{DoumasPapanicolaou2016} may be stated as follows.  Its $m=1$ case was later proved by Yu~\cite{Yu2017}.

\begin{conjecture}[Finite variance extremality]\label{conj:DP2}
For fixed $m,N$, the equal-probability vector $u=(1/N,\ldots,1/N)$ minimizes
\[
        p\mapsto \Var_p(T_m(N))
\]
over the probability simplex.
\end{conjecture}

We prove this conjecture in full.  The proof is exact and finite-dimensional.  Its central idea is to factor the Poissonized completion time into a Gamma scale variable and a Dirichlet shape variable, and then to prove that radial motion away from uniform creates a derivative measure which is later than the size-biased current law.

\subsection{Useful-hit active-mass identity}

We first record an independent exact identity that explains the variance-generation mechanism.  At a state of the collection process, call a coupon active if it has not yet been collected $m$ times.  Ignore arrivals to inactive coupons.  There are exactly $mN$ useful hits.  Let $R_\ell^p$ be the total probability mass of active coupons after $\ell$ useful hits, $0\le\ell\le mN-1$.  Conditional on the useful-hit path, the waiting time to the next useful hit is exponential of rate $R_\ell^p$.

Define
\[
        H_p=\sum_{\ell=0}^{mN-1}\frac{1}{R_\ell^p},
        \qquad
        \psi(r)=\frac{1}{r^2}-\frac{1}{r}.
\]

\begin{proposition}[Active-clock variance identity]\label{prop:active-clock}
For every $m,N,p$,
\begin{equation}\label{eq:active-clock}
        \Var_p(T_m(N))=
        \E_p\sum_{\ell=0}^{mN-1}\psi(R_\ell^p)
        +\Var_p(H_p).
\end{equation}
\end{proposition}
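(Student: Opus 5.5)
We decompose the discrete draw count along the useful-hit skeleton and apply the law of total variance twice. In the discrete model, after $\ell$ useful hits the number of draws needed to obtain the next useful hit is geometric with success probability $R_\ell^p$. This is because each draw is independently coupon $j$ with probability $p_j$, and a draw is useful exactly when it lands on an active coupon. Let $\mathcal G$ be the $\sigma$-field generated by the useful-hit path, that is, by the sequence of labels of the $mN$ useful hits.

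\textbf{Step 1: conditional structure.} Conditional on $\mathcal G$, we claim
\[
T_m(N)=\sum_{\ell=0}^{mN-1}G_\ell,
\]
where the $G_\ell$ are independent geometric variables on $\{1,2,\ldots\}$ with parameters $R_\ell^p$. The justification is the strong Markov property of the iid draw sequence. Split it at the successive useful-hit times. Within each block, the number of draws up to the first active draw and the identity of that draw are independent. The identity has law $p_j/R_\ell^p$ on the active coupons, and the count is Geometric$(R_\ell^p)$. Hence conditioning on the labels does not change the geometric law of the block lengths, and the lengths are conditionally independent.

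We should check the concern that the labels carry information about the block lengths. They do not. In each block, the label of the first active draw is independent of the number of inactive draws preceding it, because the draws are iid and "active" is a fixed set within the block.

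\textbf{Step 2: moments of geometrics.} For $G\sim\mathrm{Geom}(r)$ we have $\E G=1/r$ and $\Var G=(1-r)/r^2=\psi(r)$. Therefore
\[
\E[T_m(N)\mid\mathcal G]=H_p,\qquad
\Var(T_m(N)\mid\mathcal G)=\sum_{\ell=0}^{mN-1}\psi(R_\ell^p).
\]

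\textbf{Step 3: total variance.} The law of total variance,
\[
\Var T=\E\Var(T\mid\mathcal G)+\Var\E(T\mid\mathcal G),
\]
then gives \eqref{eq:active-clock} directly.

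\textbf{Cross-check.} As a consistency check, the same computation can be run in the Poissonized model. There the waits are exponential with rate $R_\ell^p$, so
\[
\Var X=\E\sum_\ell (R_\ell^p)^{-2}+\Var H_p,
\]
while $\E X=\E H_p$. Applying the relation $\Var T=\Var X-\E X$ from Section~2 then yields
\[
\E\sum_\ell\bigl((R_\ell^p)^{-2}-(R_\ell^p)^{-1}\bigr)+\Var H_p,
\]
which is the same identity.

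The main point needing care is Step 1, the conditional independence. The decoupling argument given there settles it, and everything else in the proof is routine.
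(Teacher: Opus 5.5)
Your proof is correct, but your main argument takes a different, discrete route from the paper's; your ``cross-check'' is exactly the paper's proof.

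The paper works only in the Poissonized model. Conditional on the useful-hit path, the waits are independent $\Exp(R_\ell^p)$, so total variance gives $\Var(X_p)=\E\sum_\ell (R_\ell^p)^{-2}+\Var(H_p)$. The statement for $T_m(N)$ then follows from the Section~2 transfer $\Var(T_m(N))=\Var(X_p)-\E X_p$ together with $\E X_p=\E H_p$.

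You instead split the discrete draw count into conditionally independent geometric blocks, so $\psi(r)=(1-r)/r^2$ appears directly as a geometric variance. This buys several things:
\begin{itemize}
\item The proof is self-contained and does not need the Gamma-mixture/rising-moment transfer.
\item The sign $\psi\ge0$ on $(0,1]$ becomes transparent.
\item Your decoupling step supplies the conditional-independence justification that the paper only asserts; its Poissonized analogue is the competing-exponentials property. Writing $L_\ell$ for the label ending block $\ell$, your argument amounts to the factorization $\Pbb(G_\ell=k,\,L_\ell=j)=(1-R_\ell)^{k-1}R_\ell\cdot p_j/R_\ell$ given the past.
\end{itemize}

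The paper's route keeps everything in terms of $X_p$ and the Poisson-corrected variance $\Var(X_p)-\E X_p$. That is the same functional the later radial extremality proof differentiates, so there the $-1/r$ part of $\psi$ is read as the Poisson correction $-\E X_p$.
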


\begin{proof}
Conditional on the useful-hit path,
\[
        X_p=\sum_{\ell=0}^{mN-1}E_\ell,
        \qquad
        E_\ell\sim \Exp(R_\ell^p)
\]
independently.  Hence
\[
        \E[X_p\mid \text{path}]=H_p,
        \qquad
        \Var(X_p\mid \text{path})=\sum_{\ell=0}^{mN-1}\frac{1}{(R_\ell^p)^2}.
\]
The law of total variance gives
\[
        \Var(X_p)=\E\sum_\ell \frac{1}{(R_\ell^p)^2}+\Var(H_p).
\]
Since $\E X_p=\E H_p$ and $\Var(T_m(N))=\Var(X_p)-\E X_p$, \eqref{eq:active-clock} follows.
\end{proof}

\subsection{A direct global proof for \texorpdfstring{$m=1$}{m=1}}

The $m=1$ case was proved by Yu~\cite{Yu2017}.  The following proof is subsumed by the general theorem below, but it is included because it gives a useful active-clock mechanism.

\begin{theorem}[Conjecture~\ref{conj:DP2} for $m=1$]\label{thm:m1-global}
For every $N\ge2$ and every probability vector $p$,
\[
        \Var_p(T_1(N))\ge \Var_u(T_1(N)),
\]
with equality iff $p=u$.
\end{theorem}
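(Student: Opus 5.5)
The plan is to use the active-clock variance identity, Proposition~\ref{prop:active-clock}, with $m=1$. At the uniform vector the useful-hit path is deterministic: after $\ell$ distinct coupons have been collected, $R_\ell^u=(N-\ell)/N$. Hence $H_u$ is constant and
\[
        \Var_u(T_1(N))=\sum_{\ell=0}^{N-1}\psi\!\left(\frac{N-\ell}{N}\right).
\]
For general $p$, \eqref{eq:active-clock} gives $\Var_p(T_1(N))\ge \E_p\sum_\ell\psi(R_\ell^p)$, because $\Var_p(H_p)\ge0$. It therefore suffices to prove, term by term,
\[
        \E_p\psi(R_\ell^p)\ge\psi\!\left(\frac{N-\ell}{N}\right),\qquad 0\le\ell\le N-1,
\]
and to show that at least one of these inequalities is strict when $p\ne u$.

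Two elementary facts about $\psi(r)=r^{-2}-r^{-1}$ on $(0,1]$ drive the argument. First, $\psi'(r)=r^{-2}-2r^{-3}<0$ for $r<2$, so $\psi$ is strictly decreasing on $(0,1]$. Second, $\psi''(r)=(6-2r)/r^4>0$, so $\psi$ is convex there. By Jensen's inequality, $\E\psi(R_\ell^p)\ge\psi(\E R_\ell^p)$. Because $\psi$ is decreasing, the termwise bound then reduces to the first-moment inequality
\[
        \E_p R_\ell^p\le \frac{N-\ell}{N}.
\]

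To prove this, observe that for $m=1$ the order in which new coupons appear is a size-biased permutation of $p$. Condition on the current uncollected set $S$, with $|S|=k=N-\ell$ and mass $R=\sum_{i\in S}p_i$. The next new coupon is $i\in S$ with probability $p_i/R$, so the expected mass it removes is $\sum_{i\in S}p_i^2/R$. By Cauchy--Schwarz, $\sum_{i\in S}p_i^2\ge R^2/k$, and therefore
\[
        \E[R_{\ell+1}^p\mid S]\le R\left(1-\frac1k\right)=R\,\frac{N-\ell-1}{N-\ell}.
\]
Taking expectations and inducting from $R_0^p=1$ gives $\E R_\ell^p\le (N-\ell)/N$ for every $\ell$.

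For strictness, look at $\ell=1$, which exists because $N\ge2$. At $\ell=0$ we have $S=[N]$, and equality in Cauchy--Schwarz holds only if all $p_i$ are equal. So if $p\ne u$, then $\E R_1^p<(N-1)/N$. Since $\psi$ is strictly decreasing, $\E\psi(R_1^p)\ge\psi(\E R_1^p)>\psi((N-1)/N)$. This yields strict inequality overall, and equality holds iff $p=u$.

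The only delicate step is the supermartingale-type bound on $\E R_\ell^p$, and it is settled by the Cauchy--Schwarz computation above. Everything else is convexity and monotonicity of $\psi$ on $(0,1]$. Note also that $\psi(1)=0$ makes the $\ell=0$ term harmless.
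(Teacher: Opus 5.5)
Your proposal is correct and follows essentially the same route as the paper's proof. Both drop $\Var_p(H_p)\ge0$ in the active-clock identity, prove $\E R^p\le(\text{remaining count})/N$ by the Cauchy--Schwarz step, apply Jensen together with the monotonicity of the convex decreasing $\psi$, and get strictness from equality in Cauchy--Schwarz at the first removal. The only difference is cosmetic: you index by the number of useful hits $\ell$, while the paper indexes by the number $r$ of unseen coupons.
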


\begin{proof}
For $m=1$, when $r$ coupons remain unseen, let $R_r$ be their total probability mass.  Then the uniform vector has $R_r^u=r/N$ deterministically.

Given an unseen set $S$ of size $r$ and total mass $R$, the next new coupon is $i\in S$ with probability $p_i/R$.  The new unseen mass is $R-p_i$, so
\[
        \E[R_{r-1}\mid S]=R-\frac{\sum_{i\in S}p_i^2}{R}.
\]
By Cauchy's inequality,
\[
        \sum_{i\in S}p_i^2\ge \frac{R^2}{r}.
\]
Therefore
\[
        \E[R_{r-1}\mid S]\le \left(1-\frac{1}{r}\right)R.
\]
Iterating from $R_N=1$ gives
\[
        \E R_r\le \frac{r}{N}.
\]
The function $\psi(r)=r^{-2}-r^{-1}$ is decreasing and convex on $(0,1]$.  Hence
\[
        \E\psi(R_r)\ge \psi(\E R_r)\ge \psi(r/N).
\]
By Proposition~\ref{prop:active-clock},
\[
        \Var_p(T_1(N))\ge \sum_{r=1}^N\E\psi(R_r)
        \ge \sum_{r=1}^N\psi(r/N).
\]
For the uniform vector, $H_u=\sum_r N/r$ is deterministic, so Proposition~\ref{prop:active-clock} gives
\[
        \Var_u(T_1(N))=\sum_{r=1}^N\psi(r/N).
\]
This proves the inequality.  If $p\ne u$, strictness occurs already in the Cauchy inequality at the first step.
\end{proof}

\begin{corollary}
For the ordinary equal-probability coupon collector,
\[
        \Var_u(T_1(N))=N^2H_N^{(2)}-NH_N.
\]
\end{corollary}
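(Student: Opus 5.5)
The identity follows from Proposition~\ref{prop:active-clock} specialized to $m=1$ and $p=u$, evaluated exactly. For $m=1$ there are $N$ useful hits. Under the uniform vector, after $\ell$ useful hits exactly $N-\ell$ coupons remain unseen, each of mass $1/N$. So the active mass $R_\ell^u=(N-\ell)/N$ is deterministic along every path, and I will reindex it by the number $r=N-\ell$ of unseen coupons, $r=1,\ldots,N$, as in the proof of Theorem~\ref{thm:m1-global}. Consequently $H_u=\sum_{r=1}^N N/r=NH_N$ is a constant, so $\Var_u(H_u)=0$, and the expectation in \eqref{eq:active-clock} is trivial. This gives
\[
        \Var_u(T_1(N))=\sum_{r=1}^N\psi(r/N),
\]
which is the equality already recorded at the end of the proof of Theorem~\ref{thm:m1-global}.

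It remains to evaluate the sum. With $\psi(x)=x^{-2}-x^{-1}$,
\[
        \sum_{r=1}^N\psi(r/N)=\sum_{r=1}^N\frac{N^2}{r^2}-\sum_{r=1}^N\frac{N}{r}=N^2H_N^{(2)}-NH_N,
\]
where $H_N=\sum_{r\le N}1/r$ and $H_N^{(2)}=\sum_{r\le N}1/r^2$.

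As a consistency check, I would rederive the result directly. The discrete time $T_1(N)$ is a sum of independent geometric variables with success probabilities $r/N$, and each has variance $(1-r/N)/(r/N)^2=N^2/r^2-N/r$. Summing over $r$ gives the same expression. This check confirms that the Poisson correction $\Var(T)=\Var(X)-\E X$ has been applied correctly, and it is the only point where the argument could go wrong.

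There is no genuine obstacle here. The only care needed is the indexing: the states visited are $r=N,N-1,\ldots,1$, and the state $r=0$ is excluded, so that $\psi$ is never evaluated at $0$.
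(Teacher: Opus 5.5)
Your proof is correct and is essentially the paper's: the corollary is read off from the last step of the proof of Theorem~\ref{thm:m1-global}, where Proposition~\ref{prop:active-clock} with the deterministic active mass $R^u=r/N$ gives $\Var_u(T_1(N))=\sum_{r=1}^N\psi(r/N)=N^2H_N^{(2)}-NH_N$. Your independent check via independent geometric waiting times with success probabilities $r/N$ is a valid confirmation of the Poisson correction, though the argument does not need it.
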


\subsection{Gamma-shape variance monotonicity}

The next lemma is the reusable module behind the full proof.  It says that for a Gamma-scaled random variable, a distributional derivative which is later than the size-biased current law forces the Poisson-corrected variance to increase.

\begin{lemma}[Gamma-shape variance monotonicity]\label{lem:shape-monotonicity}
Let $X_\theta\ge0$ have CDF $G_\theta$ and density $g_\theta$.  Put
\[
        W(\theta)=\Var(X_\theta)-\E X_\theta,
        \qquad
        \mu_\theta=\E X_\theta.
\]
Assume that $\Var(X_\theta)\ge \E X_\theta$ and that
\[
        w_\theta(x):=-\partial_\theta G_\theta(x)
\]
is nonnegative, locally integrable, and satisfies
$0<\int_0^\infty w_\theta(x)\dd x<\infty$.  Assume also that differentiation under the tail integrals below is justified.  If
\begin{equation}\label{eq:sizebiased-MLR}
        \frac{w_\theta(x)}{xg_\theta(x)}
\end{equation}
is increasing in $x$, then $W'(\theta)>0$.
\end{lemma}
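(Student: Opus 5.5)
We express $W'(\theta)$ through integrals of $w_\theta$ and then use the monotone likelihood ratio to compare $w_\theta$ with the size-biased law $xg_\theta(x)/\mu_\theta$.

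\textbf{Step 1: derivatives as integrals of $w_\theta$.} By the tail-integral formula \eqref{eq:tail-moment}, $\mu_\theta=\int_0^\infty(1-G_\theta)\dd x$ and $\E X_\theta^2=2\int_0^\infty x(1-G_\theta)\dd x$. Differentiating under the integral (assumed justified) gives
\[
        \mu_\theta'=\int_0^\infty w_\theta(x)\dd x=:\|w\|,
        \qquad
        (\E X_\theta^2)'=2\int_0^\infty x\,w_\theta(x)\dd x .
\]
Hence
\[
        W'(\theta)=2\int_0^\infty x\,w_\theta\dd x-2\mu_\theta\|w\|-\|w\|
        =\|w\|\bigl(2\E_\nu[Y]-2\mu_\theta-1\bigr),
\]
where $\nu$ is the probability law with density $w_\theta/\|w\|$ and $Y\sim\nu$. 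So it suffices to show $\E_\nu Y>\mu_\theta+\tfrac12$.

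\textbf{Step 2: MLR comparison with the size-biased law.} Let $\sigma$ be the size-biased law, with density $xg_\theta(x)/\mu_\theta$. The hypothesis \eqref{eq:sizebiased-MLR} says that $d\nu/d\sigma$ is increasing, so $\nu$ dominates $\sigma$ in likelihood-ratio order, and therefore stochastically. Hence $\E_\nu Y\ge\E_\sigma Y=\E X_\theta^2/\mu_\theta$. The inequality is strict unless $\nu=\sigma$; to make the conclusion strict I would read ``increasing'' as non-constant, or note that even equality suffices after Step 3.

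\textbf{Step 3: closing the inequality.} We have
\[
        \frac{\E X_\theta^2}{\mu_\theta}=\mu_\theta+\frac{\Var X_\theta}{\mu_\theta}\ge\mu_\theta+1,
\]
using the hypothesis $\Var X_\theta\ge\E X_\theta$. Therefore $\E_\nu Y\ge\mu_\theta+1>\mu_\theta+\tfrac12$, and so $W'(\theta)\ge\|w\|>0$. This gives strict positivity regardless of whether the MLR step is strict.

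\textbf{Main obstacle.} The only delicate point is the stochastic-dominance step. Here $\nu$ and $\sigma$ are genuine probability measures, and MLR implies first-order dominance by the standard crossing argument. The identity $\E Y=\int_0^\infty\Pbb(Y>x)\dd x$ then gives the mean comparison, including the case $\E_\nu Y=\infty$: that case is excluded because $\int x\,w_\theta$ is finite by the assumed justification of differentiation. Positivity of $\mu_\theta$ follows from $\|w\|>0$ together with $X_\theta\ge0$ being nondegenerate.
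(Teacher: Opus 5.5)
Your proof is correct and is essentially the paper's argument. It uses the same identity $W'(\theta)=\int_0^\infty[2x-(1+2\mu_\theta)]w_\theta(x)\dd x$, compares the normalized derivative measure with the size-biased law to get centroid $\ge \E X_\theta^2/\mu_\theta\ge\mu_\theta+1$, and concludes $W'(\theta)\ge\int_0^\infty w_\theta>0$ without needing strict MLR. The only cosmetic difference is that you go through first-order stochastic dominance where the paper cites Chebyshev's integral inequality; also, $\mu_\theta>0$ follows from $X_\theta$ having a density, not from $\int_0^\infty w_\theta>0$.
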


\begin{proof}
The tail-integral identities give
\[
        \mu_\theta'=\int_0^\infty w_\theta(x)\dd x,
        \qquad
        \frac{d}{d\theta}\E X_\theta^2=2\int_0^\infty xw_\theta(x)\dd x.
\]
Therefore
\begin{equation}\label{eq:Wprime-general}
        W'(\theta)=\int_0^\infty [2x-(1+2\mu_\theta)]w_\theta(x)\dd x.
\end{equation}
The monotonicity hypothesis has the following standard likelihood-ratio interpretation.  Normalize
\[
        \nu_\theta(\dd x)=\frac{w_\theta(x)\dd x}{\int_0^\infty w_\theta(y)\dd y}.
\]
The size-biased law of $X_\theta$ has density proportional to $xg_\theta(x)$; hence \eqref{eq:sizebiased-MLR} says precisely that the Radon--Nikodym derivative of $\nu_\theta$ with respect to this size-biased law is increasing in $x$.  Thus $\nu_\theta$ dominates the size-biased law in monotone likelihood-ratio order.  Equivalently, by Chebyshev's integral inequality, its centroid is at least
\[
        \frac{\E X_\theta^2}{\E X_\theta}
        =\mu_\theta+\frac{\Var(X_\theta)}{\mu_\theta}.
\]
By the assumed inequality $\Var(X_\theta)\ge\mu_\theta$, this centroid is at least $\mu_\theta+1$.  Substituting this into \eqref{eq:Wprime-general} gives
\[
        W'(\theta)\ge \int_0^\infty w_\theta(x)\dd x>0.
\]
\end{proof}

\begin{remark}
In the coupon-collector application, $\Var(X_\theta)\ge\E X_\theta$ is automatic from the Poissonization identity
$\Var(T)=\Var(X)-\E X\ge0$.
\end{remark}

\subsection{The reverse-hazard ratio lemma}

Let
\[
        F(y)=F_m(y)=\Pbb(\GammaDist(m,1)\le y)
        =1-e^{-y}\sum_{k=0}^{m-1}\frac{y^k}{k!},
\]
and set
\[
        \phi(y)=\frac{F'(y)}{F(y)}.
\]
We need a one-site monotonicity property of this reverse hazard.

\begin{lemma}[Log-scale concavity of the reverse hazard]\label{lem:reverse-hazard-ratio}
For every $m\ge1$, the function
\[
        e(y):=\frac{d}{d\log y}\log\phi(y)
\]
is strictly negative and strictly decreasing on $(0,\infty)$.  Consequently, $\phi$ is strictly decreasing, and for every $c>1$ the ratio
\[
        y\mapsto \frac{\phi(cy)}{\phi(y)}
\]
is strictly decreasing.
\end{lemma}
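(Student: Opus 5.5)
The plan is to reduce everything to an explicit one-dimensional integral. Write $k=m-1$ and $F'(y)=y^ke^{-y}/k!$. Then
\[
        e(y)=y\frac{d}{dy}\log F'(y)-y\frac{F'(y)}{F(y)}=k-y-y\phi(y).
\]
Substituting $t=y(1-v)$ in $F(y)=\frac1{k!}\int_0^y t^ke^{-t}\dd t$ gives
\[
        y\phi(y)=\frac{1}{J(y)},\qquad J(y)=\int_0^1(1-v)^ke^{yv}\dd v,
\]
and therefore
\[
        e(y)=k-y-\frac{1}{J(y)}.
\]
As $y\downarrow0$ we have $J\to1/(k+1)$, so $e(0+)=-1<0$. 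Strict negativity on $(0,\infty)$ then follows once we show that $e$ is strictly decreasing. So the whole lemma reduces to the claim $e'(y)<0$. Since $e'(y)=-1+J'(y)/J(y)^2$, this is the inequality
\[
        J'(y)<J(y)^2\qquad(y>0),
\]
that is, the mean $\E_y[v]=J'/J$ of $v$ under the tilted density proportional to $(1-v)^ke^{yv}$ on $[0,1]$ is smaller than $J(y)$.

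\textbf{Step 1: checking the inequality at the ends.} At $y=0$ it reads $\frac{1}{(k+1)(k+2)}<\frac{1}{(k+1)^2}$, which holds. As $y\to\infty$ we have $J\sim J'\sim k!\,e^y/y^{k+1}$, so $J^2$ dominates by an exponential factor. Also, since $\E_y[v]<1$ always, the inequality is automatic wherever $J(y)\ge1$. The real work is therefore on a bounded $y$-interval where $J<1$.

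\textbf{Step 2: barrier argument via the confluent hypergeometric equation.} $J$ is a multiple of ${}_1F_1(1;k+2;y)$. Hence it satisfies the Kummer equation
\[
        yJ''+(k+2-y)J'-J=0,
\]
which can also be checked directly by integrating by parts in $v$. Put $D=J'-J^2$, so $D(0)<0$. Suppose $y_0>0$ is the first zero of $D$. There $J'=J^2$, and the Kummer equation gives
\[
        D'(y_0)=\frac{J\bigl[1-(k+2-y_0)J-2y_0J^2\bigr]}{y_0}.
\]
To reach a contradiction I would show this is negative at any point with $J'=J^2$, i.e.\ that $(k+2-y)J+2yJ^2>1$ there. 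The route I would try is to use the integration-by-parts identity $yJ=-1+kJ_{k-1}$ (with $J_{k-1}$ the same integral with exponent $k-1$), together with the lower bound $J(y)\ge\frac1{k+1}+\frac{y}{(k+1)(k+2)}$ obtained from $e^{yv}\ge1+yv$.

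\textbf{Main obstacle and fallback.} I expect Step 2, the inequality $J'<J^2$ on the intermediate range, to be the main obstacle. If the barrier computation does not close, the fallback is to compare Taylor coefficients. $J(y)=\sum_n \frac{k!\,y^n}{(n+k+1)!}$ has explicit coefficients, so both $J^2$ and $J'$ are power series with positive coefficients. I would check coefficientwise that $[y^n]J^2>[y^n]J'$, which reduces to a binomial-type inequality of the form
\[
        \sum_{i+j=n}\frac{k!^2}{(i+k+1)!\,(j+k+1)!}>\frac{(n+1)\,k!}{(n+k+2)!}.
\]
This should follow by bounding the sum below by its $i=0$ and $j=0$ terms when $k\ge1$; the case $k=0$ is checked directly, since there $J=(e^y-1)/y$.

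\textbf{Consequences.} Once $e$ is known to be strictly negative and strictly decreasing, the remaining claims are immediate. First, $\frac{d}{d\log y}\log\phi=e<0$, so $\phi$ is strictly decreasing. Second, for $c>1$,
\[
        \frac{d}{d\log y}\log\frac{\phi(cy)}{\phi(y)}=e(cy)-e(y)<0,
\]
because $cy>y$ and $e$ is strictly decreasing. So $y\mapsto\phi(cy)/\phi(y)$ is strictly decreasing.
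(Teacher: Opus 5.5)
Your reduction is correct: $y\phi=1/J$, $e=k-y-1/J$, $e(0+)=-1$, and the lemma is equivalent to $J'<J^2$ on $(0,\infty)$. The consequences paragraph is also fine. The gap is that $J'<J^2$ is never actually established.

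Step~2 stops at a plan. The linear bound $J\ge\frac{k+2+y}{(k+1)(k+2)}$ is by itself too weak for the barrier inequality. For example, at $k=98$, $y=20$, substituting it into $(k+2-y)J+2yJ^2$ (which is increasing in $J$ there) gives only about $0.976$.

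The fallback's closing step is false for $m\ge3$. The $i=0$ and $j=0$ terms sum to $\frac{2\,k!}{(k+1)(n+k+1)!}$. This exceeds $\frac{(n+1)k!}{(n+k+2)!}$ iff $2(n+k+2)>(n+1)(k+1)$, i.e.\ iff $(n-1)(k-1)<4$. For $k=2$ you get equality at $n=5$ and failure for all $n\ge6$. For every $k\ge2$ the two end terms supply only a fraction tending to $2/(k+1)$ of the right side. They are in fact the \emph{smallest} summands, since $\frac{1}{(i+k+1)!\,(n-i+k+1)!}=\binom{n+2k+2}{i+k+1}/(n+2k+2)!$ is minimized at $i\in\{0,n\}$. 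So as written your argument covers only $m\le2$.

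The repair is to keep all $n+1$ summands. For $i+j=n$,
\[
\frac{(n+k+2)!}{(j+k+1)!}=\prod_{r=0}^{i}(j+k+2+r)>\prod_{r=0}^{i}(k+1+r)=\frac{(i+k+1)!}{k!},
\]
so each summand exceeds $k!/(n+k+2)!$. Summing gives the coefficient inequality for every $n\ge0$, hence $J^2-J'>0$ for $y>0$. With this fix your proof is complete.

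The fixed argument is the paper's Appendix~B computation in another normalization. Since $J=k!\,D_m/y^m$, the quantity $J^2-J'$ is $k!/y^{2m}$ times the numerator expanded there, and $b_m=-e$.

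The paper's proof of this lemma needs no coefficient inequality. From $\phi=y^{m-1}/((m-1)!D_m)$ one gets $e(y)=m-1-\E K_y$, where $K_y$ has weights proportional to $y^n/n!$ on $\{m,m+1,\dots\}$. Then $\E K_y\ge m$ gives $e<0$, and $\frac{d}{d\log y}\E K_y=\Var(K_y)>0$ gives strict decrease. In your notation this is the identity $J^2-J'=J^2\Var(K_y)/y$, which would have settled the key inequality directly.
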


\begin{proof}
Write
\[
        D_m(y)=e^y-\sum_{k=0}^{m-1}\frac{y^k}{k!}
        =\sum_{n=m}^{\infty}\frac{y^n}{n!}.
\]
Since
\[
        F(y)=e^{-y}D_m(y),
        \qquad
        F'(y)=e^{-y}\frac{y^{m-1}}{(m-1)!},
\]
we have
\[
        \phi(y)=\frac{y^{m-1}}{(m-1)!D_m(y)}.
\]
Therefore
\[
        e(y)=m-1-\frac{yD_m'(y)}{D_m(y)}.
\]
The quantity
\[
        \frac{yD_m'(y)}{D_m(y)}
\]
is the mean of the integer-valued random variable $K_y$ on $\{m,m+1,\ldots\}$ with weights proportional to $y^n/(n!)$.  In particular $\E K_y>m-1$, so $e(y)<0$ and $\phi'(y)=\phi(y)e(y)/y<0$.  Differentiating with respect to $\log y$ gives
\[
        \frac{d}{d\log y}\E K_y=\Var(K_y)>0.
\]
Thus $e(y)$ is strictly decreasing.  Finally,
\[
        \frac{d}{d\log y}\log\frac{\phi(cy)}{\phi(y)}=e(cy)-e(y)<0
\]
for $c>1$, proving the ratio statement.
\end{proof}

\subsection{Proof of finite variance extremality}

\begin{theorem}[Finite variance extremality]\label{thm:DP2-proof}
For every $m\ge1$, $N\ge2$, and every positive probability vector $p=(p_1,\ldots,p_N)$,
\[
        \Var_p(T_m(N))\ge \Var_u(T_m(N)),
        \qquad
        u=(1/N,\ldots,1/N),
\]
with equality iff $p=u$.
\end{theorem}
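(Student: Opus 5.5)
We apply Lemma~\ref{lem:shape-monotonicity} along the ray from $u$ through $p$. Fix $p\ne u$ and put $d=p-u$, so $\sum_j d_j=0$ and $d\ne0$. For $\theta\in[0,1]$ let $p(\theta)=u+\theta d$. Every coordinate is positive on this segment because the simplex interior is convex. Let $X_\theta=X_{m,p(\theta)}$ be the Poissonized completion time, and set $W(\theta)=\Var(X_\theta)-\E X_\theta$, which equals $\Var_{p(\theta)}(T_m(N))$ by the transfer \eqref{eq:rising-transfer}. It suffices to prove $W'(\theta)>0$ for every $\theta\in(0,1]$. Since $W$ is continuous on $[0,1]$, the mean value theorem then gives $W(1)>W(0)$, which is the strict inequality; equality at $p=u$ is trivial. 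Smoothness of $W$ in $\theta$ is immediate from Proposition~\ref{prop:finite-rational}, because the rising moments are rational functions of $p$ with no poles on the open simplex. Differentiation under the tail integrals is justified by the uniform exponential bound $\Pbb(X_\theta>x)\le\sum_jQ_m(p_j(\theta)x)$ on compact $\theta$-sets. The hypothesis $\Var(X_\theta)\ge\E X_\theta$ is the Remark after Lemma~\ref{lem:shape-monotonicity}.

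The computation is explicit. By \eqref{eq:poissonized-product}, $G_\theta(x)=\prod_jF(p_j x)$ with $p_j=p_j(\theta)$ and $F=F_m$. Writing $\phi=F'/F$,
\[
        g_\theta(x)=G_\theta(x)\sum_j p_j\phi(p_jx),
        \qquad
        \partial_\theta G_\theta(x)=G_\theta(x)\,x\sum_j d_j\phi(p_jx).
\]
Since $d_j=(p_j-1/N)/\theta$, this yields
\[
        w_\theta(x)=\frac{xG_\theta(x)}{\theta}\Bigl[\frac1N\sum_j\phi(p_jx)-\sum_jp_j\phi(p_jx)\Bigr],
        \qquad
        \frac{w_\theta(x)}{xg_\theta(x)}=\frac1\theta\Bigl[\frac{1}{N\,S_\theta(x)}-1\Bigr],
\]
where
\[
        S_\theta(x)=\frac{\sum_jp_j\phi(p_jx)}{\sum_j\phi(p_jx)}
\]
is the mean of the values $p_j$ under weights proportional to $\phi(p_jx)$.

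The core step, which I expect to be the main obstacle, is to show that $S_\theta(x)$ is strictly decreasing in $x$ and satisfies $S_\theta(x)<1/N$. Lemma~\ref{lem:reverse-hazard-ratio} gives both. Take $p_i<p_j$ and put $c=p_j/p_i>1$. The weight ratio $\phi(p_jx)/\phi(p_ix)=\phi(c\,y)/\phi(y)$ with $y=p_ix$ is strictly decreasing in $x$. Hence, grouping the coordinates by distinct value, the weight distribution on the values of $p$ moves strictly downward in likelihood-ratio order as $x$ increases. Its mean $S_\theta(x)$ is therefore strictly decreasing, because $p(\theta)$ is nonconstant for $\theta>0$. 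Moreover $\phi$ is strictly decreasing, so the sequences $p_j$ and $\phi(p_jx)$ are oppositely ordered. Chebyshev's sum inequality then gives $\sum_jp_j\phi(p_jx)<\frac1N\sum_j\phi(p_jx)$, i.e.\ $S_\theta(x)<1/N$, with strict inequality since the $p_j$ are not all equal.

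Consequently $w_\theta>0$ on $(0,\infty)$, and $w_\theta(x)/(xg_\theta(x))$ is strictly increasing. It remains to check $0<\int_0^\infty w_\theta<\infty$. Positivity is clear. For finiteness, $w_\theta$ is continuous, and its tail is controlled by $1-G_\theta$ times a polynomial, since $\phi(y)\lesssim y^{m-1}e^{-y}$ for large $y$. Lemma~\ref{lem:shape-monotonicity} then gives $W'(\theta)>0$ for all $\theta\in(0,1]$, which completes the argument.
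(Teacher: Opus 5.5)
Your proposal is correct and follows the paper's proof step for step: the same ray $p(\theta)=u+\theta(p-u)$, the same formulas for $w_\theta$ and for $w_\theta/(xg_\theta)$ in terms of the $\phi$-weighted mean of the coordinates, Chebyshev's sum inequality for the sign, and the Gamma-shape variance monotonicity lemma to finish. The differences are cosmetic. You obtain the strict decrease of the weighted mean from the ratio consequence $y\mapsto\phi(cy)/\phi(y)$ by likelihood-ratio ordering of the weights, whereas the paper writes $dM/d\log t=\Cov_{\alpha(t)}(q_i,e(q_it))$ and uses that $e$ is decreasing. You also justify smoothness in $\theta$ through the finite rational moment formula, where the paper appeals to the Gamma tails.
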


\begin{proof}
It is enough to prove strict radial monotonicity away from $u$.  Fix a nonuniform vector $p$, and set
\[
        p_i(\theta)=\frac{1}{N}+\theta h_i,
        \qquad
        h_i=p_i-\frac{1}{N},
        \qquad
        0\le \theta\le1.
\]
For $\theta>0$, put $q_i=p_i(\theta)$.  The Poissonized completion time has CDF
\[
        G_\theta(t)=\prod_{i=1}^NF(q_it).
\]
Its density is
\[
        g_\theta(t)=G_\theta(t)\sum_{i=1}^Nq_i\phi(q_it).
\]
Let
\[
        w_\theta(t):=-\partial_\theta G_\theta(t).
\]
Since
\[
        \partial_\theta\log G_\theta(t)=t\sum_i h_i\phi(q_it),
\]
and $h_i=(q_i-1/N)/\theta$, we obtain
\[
        w_\theta(t)=\frac{G_\theta(t)t}{\theta}
        \left[\frac{1}{N}\sum_i\phi(q_it)-\sum_iq_i\phi(q_it)\right].
\]
The bracket is nonnegative.  Indeed, $q_i$ and $\phi(q_it)$ are oppositely ordered because $\phi$ is decreasing; hence the average of the $q_i$ with weights proportional to $\phi(q_it)$ is at most the unweighted average $1/N$.  Since $p$ is nonuniform, the bracket is strictly positive for $t>0$.

Define
\[
        C(t)=\sum_i\phi(q_it),
        \qquad
        B(t)=\sum_iq_i\phi(q_it),
\]
and
\[
        M(t)=\frac{B(t)}{C(t)}.
\]
Then
\begin{equation}\label{eq:w-over-tg}
        \frac{w_\theta(t)}{t g_\theta(t)}
        =\frac{1}{\theta}\left(\frac{C(t)}{NB(t)}-1\right)
        =\frac{1}{\theta}\left(\frac{1}{NM(t)}-1\right).
\end{equation}
We now show that $M(t)$ is decreasing.  Let
\[
        \alpha_i(t)=\frac{\phi(q_it)}{C(t)}.
\]
Then
\[
        M(t)=\sum_iq_i\alpha_i(t).
\]
With $e(y)=d\log\phi(y)/d\log y$,
\[
        \frac{dM}{d\log t}
        =\sum_i q_i\alpha_i(t)\left(e(q_it)-\sum_j\alpha_j(t)e(q_jt)\right)
        =\Cov_{\alpha(t)}(q_i,e(q_it)).
\]
By Lemma~\ref{lem:reverse-hazard-ratio}, $e$ is decreasing.  Thus $q_i$ and $e(q_it)$ are oppositely ordered, and the covariance is nonpositive; it is strictly negative unless all $q_i$ are equal.  Hence $M(t)$ is strictly decreasing.  From \eqref{eq:w-over-tg}, $w_\theta(t)/(tg_\theta(t))$ is strictly increasing.

The remaining hypotheses of Lemma~\ref{lem:shape-monotonicity} are satisfied here.  The function $G_\theta(t)=\prod_iF(q_it)$ is smooth in $(\theta,t)$ for $\theta>0$ and $t>0$; its Gamma tails justify differentiating under the first two tail integrals; and the formula above gives $w_\theta(t)\ge0$ with finite positive integral because $p$ is nonuniform.  Finally, $\Var(X_{p(\theta)})\ge\E X_{p(\theta)}$ follows from the Poissonization identity $\Var(T)=\Var(X)-\E X\ge0$.

By Lemma~\ref{lem:shape-monotonicity},
\[
        \frac{d}{d\theta}\left(\Var(X_{p(\theta)})-\E X_{p(\theta)}\right)>0.
\]
But
\[
        \Var_{p(\theta)}(T_m(N))=\Var(X_{p(\theta)})-\E X_{p(\theta)}.
\]
Therefore $\theta\mapsto \Var_{p(\theta)}(T_m(N))$ is strictly increasing on $(0,1]$.  By continuity at $\theta=0$, integrating this strict derivative over $(0,1]$ gives $\Var_p(T_m(N))>\Var_u(T_m(N))$ for every nonuniform $p$.  This proves the claim.
\end{proof}

\begin{corollary}[Strict local minimum at uniform]\label{cor:strict-local-main}
For every $m\ge1$ and $N\ge2$, the uniform vector $u$ is a strict local minimizer of $p\mapsto\Var_p(T_m(N))$ on the probability simplex.
\end{corollary}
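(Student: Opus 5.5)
This corollary follows almost immediately from Theorem~\ref{thm:DP2-proof}, and I would derive it that way. A strict local minimizer means there is a neighbourhood $U$ of $u$ in the simplex with $\Var_p(T_m(N))>\Var_u(T_m(N))$ for every $p\in U\setminus\{u\}$. The theorem gives this inequality for every positive nonuniform $p$, so the first step is to choose $U$ small enough that all its points are positive vectors. Concretely, take $U=\{p:\max_i|p_i-1/N|<1/(2N)\}$ intersected with the simplex. Every $p\in U$ has $p_i>1/(2N)>0$, so Theorem~\ref{thm:DP2-proof} applies to each $p\in U\setminus\{u\}$ and yields the strict inequality. This is the entire argument.

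I would add a short remark on why the positivity restriction is harmless. The theorem is stated only for positive vectors, and on the boundary of the simplex $T_m(N)$ is infinite, so those points cannot spoil local minimality. Since $u$ lies in the relative interior, some neighbourhood avoids the boundary altogether, which is exactly what the choice of $U$ ensures.

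As a sanity check, I would also note that the radial proof already gives the stronger fact that $\theta\mapsto\Var_{p(\theta)}(T_m(N))$ is strictly increasing along each ray. Either this or the global inequality suffices for the corollary.

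There is no real obstacle here. The only point needing care is to take the neighbourhood inside the open simplex so that the hypothesis ``positive probability vector'' of the theorem holds.
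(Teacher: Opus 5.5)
Your proposal is correct and takes the same approach as the paper, which also deduces the corollary directly from the global strict inequality of Theorem~\ref{thm:DP2-proof}. The paper phrases it along lines $u+\varepsilon h$ in tangent directions; your explicit neighbourhood inside the open simplex matches the definition of a strict local minimizer more directly and makes the positivity hypothesis of the theorem visibly satisfied.
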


\begin{proof}
This is immediate from Theorem~\ref{thm:DP2-proof}.  More precisely, for every nonzero tangent direction $h$ with $\sum_i h_i=0$, the curve $u+\varepsilon h$ is nonuniform for every sufficiently small nonzero $\varepsilon$, and Theorem~\ref{thm:DP2-proof} gives
\[
        \Var_{u+\varepsilon h}(T_m(N))>\Var_u(T_m(N)).
\]
An independent Hessian proof, useful for comparison with other terminal-defect models, is given in Appendix~\ref{app:hessian}.
\end{proof}

\subsection{Why simpler local principles are insufficient}

Several tempting stronger principles are false, even though Conjecture~\ref{conj:DP2} is true.
\begin{itemize}[leftmargin=*]
\item Pairwise variance smoothing need not hold: averaging two coordinates may lower the mean while increasing the variance.  This already occurs for $m=1$, so the variance is not simply Schur-convex by elementary smoothing.
\item Pointwise Bellman comparison of deficit states is false.  A nonuniform vector can be locally less variable at states where the active coupons happen to be high-probability coupons.
\item Pairwise frontier terms in the radial profile decomposition need not be individually late-tail enough.  The proof works because the full radial derivative measure is size-biased later than the current law, an aggregate property not visible term by term.
\end{itemize}

\section{Concluding remarks and open problems}

The terminal-defect viewpoint gives a unified and reusable proof structure for the double Dixie cup problem.  The equal-probability growing-$m$ theorem and moment module recover the fixed-$m$ equal-probability variance asymptotic stated by Doumas--Papanicolaou~\cite{DoumasPapanicolaou2016}; the case $m=1$ is classical by Brayton~\cite{Brayton1963}, while the present module treats fixed $m\ge2$ uniformly and extends to arbitrary growing multiplicity.  The unequal-probability modules give direct terminal-defect versions of the two main asymptotic regimes in~\cite{DoumasPapanicolaou2016}: infinite products in Case I and endpoint Laplace estimates in Case II\@.  The power-law theorem verifies the endpoint module for the concrete family $p_{N,j}\propto j^{-\alpha}$.

The finite variance-minimization conjecture is also resolved here.  The proof does not rely on pairwise smoothing or statewise Bellman comparison, both of which are false in stronger forms.  Instead it uses a global radial comparison: the derivative of the Poissonized completion distribution away from uniform is later than the size-biased current law.  The one-site reason is the log-scale monotonicity of the Gamma CDF reverse hazard
\[
        \phi_m(y)=\frac{F_m'(y)}{F_m(y)}.
\]
This gives a compact proof of the finite extremality statement and explains why local smoothing arguments can fail while the aggregate radial inequality remains true.

Several directions remain open or worth developing further.
\begin{enumerate}[label=\textup{(\arabic*)},leftmargin=*]
\item Verify the endpoint-Laplace hypothesis for broader differentiability classes beyond the power-law family treated here.
\item Extend the radial size-biased MLR module to related terminal-defect collectors, such as grouped coupons, structured spatial collectors, and nonmonotone coupon processes.
\item Investigate whether the active-clock identity admits analogous global radial monotonicity proofs in models where no independent Gamma-shape representation is available.
\item Develop quantitative stability: estimate the variance gap $\Var_p(T_m(N))-\Var_u(T_m(N))$ in terms of distances such as $\sum_i(p_i-1/N)^2$ or $D_{\mathrm{KL}}(p\Vert u)$.
\end{enumerate}

\appendix

\section{Gamma-tail quantile estimates}\label{app:gamma-tail}

This appendix records the gamma-tail estimates used in Lemma~\ref{lem:gamma-tail-inversion}.  The point is not to reprove Temme's saddle-point expansion, but to isolate exactly which consequences of the uniform incomplete-gamma asymptotics are needed in the probabilistic argument.

Throughout this appendix
\[
        Q_m(t)=e^{-t}\sum_{j=0}^{m-1}{\frac{t^j}{j!}},
        \qquad
        f_m(t)=e^{-t}{\frac{t^{m-1}}{(m-1)!}},
        \qquad
        h_m(t)={\frac{f_m(t)}{Q_m(t)}}.
\]
Thus $Q_m(t)=Q(m,t)$ in Temme's notation, where
$Q(a,x)=\Gamma(a,x)/\Gamma(a)$ is the normalized upper incomplete gamma function.  Temme's turning-point variable is
\[
        {\frac12}\eta^2=\lambda-1-\log\lambda,
        \qquad
        \lambda={\frac{x}{a}},
        \qquad
        \operatorname{sign}\eta=\operatorname{sign}(\lambda-1).
\]
Temme~\cite[Sec.~1, (1.1), (1.3)--(1.6)]{Temme1979} proves the representation
\[
        Q(a,x)={\frac12}\operatorname{erfc}(\eta\sqrt{a/2})+R_a(\eta)
\]
with an expansion for $R_a(\eta)$ that is uniform for real $\eta$, equivalently uniform for $x\in[0,\infty)$; the differentiated expansion and remainder representation are given in~\cite[Sec.~2, (2.1)--(2.14)]{Temme1979}.  Temme's inversion paper restates this uniform expansion in~\cite[Sec.~2, (2.1)--(2.7)]{Temme1992} and treats the inverse problem $Q(a,x)=q$ by first solving
\[
        {\frac12}\operatorname{erfc}(\eta_0\sqrt{a/2})=q
\]
and then writing $\eta=\eta_0+\varepsilon(\eta_0,a)$ with an expansion in powers of $a^{-1}$; see~\cite[Sec.~3, (3.1)--(3.10)]{Temme1992}.  The analytic and end-tail behavior of the inversion coefficients is recorded in~\cite[Sec.~5, (5.1)--(5.3)]{Temme1992}.  In the proof below, however, we do not need to assume a uniform inversion remainder in the moving extreme-tail range $q=1/n$.  We use Temme's expansion only to identify the correct signed turning-point scale near $x=a$; the high-quantile upper bound is obtained from Chernoff's inequality, and the local hazard estimates are elementary consequences of the finite-sum formula for integer shapes.

\begin{proposition}[Gamma quantile estimates]\label{prop:gamma-quantile-estimates}
Let $m_n\ge1$, let $b_n$ be the unique solution of
\[
        Q_{m_n}(b_n)=1/n,
\]
and put
\[
        a_n={\frac{Q_{m_n}(b_n)}{f_{m_n}(b_n)}}={\frac1{h_{m_n}(b_n)}}.
\]
Then, uniformly for $x$ in compact subsets of $\mathbb R$,
\[
        {\frac{Q_{m_n}(b_n+a_nx)}{Q_{m_n}(b_n)}}\to e^{-x}.
\]
Moreover,
\[
        {\frac{Q_{m_n}(b_n+a_nx)}{Q_{m_n}(b_n)}}\le e^{-x}
        \qquad (x\ge0),
\]
there are constants $c>0$ and $R_n\to\infty$ such that
\[
        nQ_{m_n}(b_n-a_nx)\ge c e^{cx},
        \qquad 0\le x\le R_n,
\]
the window may be chosen so slowly that
\[
        R_n\le {\frac{b_n}{2a_n}},
        \qquad
        \left(1+{\frac{b_n}{a_n}}\right)^r\exp\{-c e^{cR_n}\}\to0
\]
for every fixed $r>0$, and
\[
        {\frac{n a_n^2}{b_n}}\to\infty.
\]
\end{proposition}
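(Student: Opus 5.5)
The plan is to work throughout with the hazard $h=h_{m_n}=f_m/Q_m$ and the identity
\[
\log\frac{Q_m(b_n+a_nx)}{Q_m(b_n)}=-\int_0^x a_n h(b_n+a_nu)\dd u ,
\]
valid for all real $x$ with $b_n+a_nx\ge0$.

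\textbf{Right-tail bound.} The bound for $x\ge0$ is already shown in the proof of Lemma~\ref{lem:gamma-tail-inversion}. The representation $1/h(t)=\int_0^\infty e^{-s}(1+s/t)^{m-1}\dd s$ makes $h$ nondecreasing, and $a_nh(b_n)=1$.

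\textbf{Local convergence.} Next I need $a_nh(b_n+a_nu)\to1$ uniformly on compact $u$-sets. Differentiating the integral representation gives a relative slope
\[
\frac{d}{dt}\log\frac1{h(t)}=-\frac{\int_0^\infty e^{-s}(1+s/t)^{m-2}(m-1)s/t^2\dd s}{\int_0^\infty e^{-s}(1+s/t)^{m-1}\dd s}.
\]
Multiplying by $a_n$, this is bounded by a quantity of order $(m-1)a_n/b_n^2\cdot(1+\text{moment ratio})$. So the key scale facts are
\[
a_n/b_n\to0,\qquad (m-1)a_n^2/b_n^2\to0 .
\]

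\textbf{Identifying the scale.} Here I split into regimes.
\begin{itemize}
\item \emph{Bounded $m$.} Here $b_n\sim\log n$ and $a_n\to1$, and everything is elementary.
\item \emph{Large $m$.} Use Chernoff: $Q_m(b)=1/n$ forces $b=m+\Theta(\sqrt{m\log n}+\log n)$, so $b_n\ge m$ once $n$ is large. Since $b_n\ge m$, the tilted weight $(1+s/t)^{m-1}$ with $t\ge m$ is controlled, giving $a_n\asymp \min(\sqrt m,\ \sqrt m/\eta\sqrt m)$, i.e. the Mills-ratio scale $a_n\asymp m/(b_n-m+\sqrt m)$.
\end{itemize}
Temme's expansion is used only to confirm this is the erfc Mills ratio at $\eta\sqrt{m/2}$. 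Then $a_n/b_n\lesssim \sqrt m/m\to0$ and $(m-1)a_n^2/b_n^2\lesssim 1/\log n\to0$. This yields the uniform convergence $\to e^{-x}$.

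\textbf{Clock condition.} The condition $na_n^2/b_n\to\infty$ follows from $a_n\gtrsim\min(1,m/(b_n-m))$ and $b_n\lesssim m+\log n+\sqrt{m\log n}$. Even in the worst case $a_n\gtrsim 1/\sqrt{\log n}$ relative factors, one gets $na_n^2/b_n\ge n/\mathrm{poly}(\log n,m)$. If $m$ is comparable to or larger than a power of $n$, then $a_n\asymp\sqrt m$ and $na_n^2/b_n\asymp n$. I expect this case bookkeeping to be routine but tedious.

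\textbf{Left tail.} For $0\le x$ with $b_n-a_nx\ge b_n/2$, monotonicity of $h$ gives $a_nh(b_n-a_nu)\le1$, which only yields $nQ\ge e^{x}$ in the wrong direction. What is actually needed is a lower growth rate $a_nh(b_n-a_nu)\ge c$. Equivalently, the hazard may not collapse by more than a constant factor over a window of $R_n$ scale units.
\begin{itemize}
\item \emph{Main step (from the slope bound).} The derivative estimate above shows $\log(1/h)$ moves by $o(1)$ per unit of $a_n$ while $u\le$ some $R_n'\to\infty$ (take $R_n'$ with $R_n'(a_n/b_n)\sqrt{m}\to0$). This gives $nQ_m(b_n-a_nx)\ge e^{x/2}$ there.
\item \emph{Main obstacle: extending the window.} To get $R_n$ large enough that $(1+b_n/a_n)^r\exp(-ce^{cR_n})\to0$, I need only $e^{cR_n}\gg\log(b_n/a_n)$, i.e. $R_n\gtrsim C\log\log(b_n/a_n)$. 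So I would choose
\[
R_n=\min\bigl(R_n',\ b_n/(2a_n),\ 2c^{-1}\log\log(3+b_n/a_n)\bigr).
\]
I must still check $R_n'$ exceeds the third term. This holds because $R_n'$ can be taken of polynomial size in $b_n/a_n$: the slope bound is $O(a_n/b_n)$ per unit in the $m\le b_n$ regimes.
\end{itemize}
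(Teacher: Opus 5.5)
Your outline follows the paper's architecture: bounded versus growing $m_n$, Chernoff for the upper location of $b_n$, and the representation $R(t):=1/h(t)=\int_0^\infty e^{-s}(1+s/t)^{m_n-1}\,ds$. However, the step you flag as the main obstacle is a genuine gap, and your resolution of it does not work. The claim that the slope of $\log(1/h)$ is $O(a_n/b_n)$ per unit of $u$ fails when $m_n\gg\log n$. Put $\delta(t)=1-(m_n-1)/t$ and $\delta_n=\delta(b_n)$. The exact identity $R'=\delta R-1$ shows that the per-unit slope at $b_n$ is $1-\delta_nR(b_n)\approx(m_n-1)/(b_n-m_n+1)^2$. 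This exceeds $a_n/b_n\approx 1/(b_n-m_n+1)$ by the unbounded factor $(m_n-1)/(b_n-m_n+1)$.

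More decisively, no admissible window can be polynomial in $b_n/a_n$. Since $Q_{m_n}\le 1$, the inequality $nQ_{m_n}(b_n-a_nx)\ge ce^{cx}$ at $x=R_n$ forces $ce^{cR_n}\le n$. On the other hand, $R(t)\le 1/\delta(t)$ gives $b_n/a_n\ge b_n-m_n+1$, and this exceeds $\sqrt{m_n}$ for large $n$ once $m_n\to\infty$ (by the CLT for $\GammaDist(m_n,1)$). For $m_n=\lceil e^n\rceil$ this yields $(1+b_n/a_n)^r\exp\{-ce^{cR_n}\}\ge e^{(r/2-1)n}\to\infty$ for every $r>2$. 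So the proposition as stated is false without a growth restriction on $m_n$, and no choice of your third term in the minimum can repair this.

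The paper's proof has the same weak point. It obtains $\widetilde R_n$ from an unquantified diagonal argument and then appeals to thinning the window. But shrinking $R_n$ makes $\exp\{-ce^{cR_n}\}$ larger, so the last window condition is really a lower bound on $R_n$, and nothing in either argument supplies it.

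\textbf{What can be proved.} One can get an explicit window. Combine $R(t)\le 1/\delta(t)$ with the paper's key estimate $\delta_nR(b_n)\to1$, which is valid since $b_n\delta_n^2\to\infty$. This gives $a_nh(b_n-a_nu)\ge(1-o(1))\bigl(1-u/(b_n\delta_n^2)\bigr)$ for $0\le u<b_n\delta_n^2$. Hence, for large $n$, $nQ_{m_n}(b_n-a_nx)\ge e^{x/3}$ for $0\le x\le b_n\delta_n^2$. Taking $R_n=b_n\delta_n^2/2\le b_n/(2a_n)$, the window condition then holds under a hypothesis such as $\log\log(b_n/a_n)=o(b_n\delta_n^2)$.

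\textbf{Other unproved inputs.} The fix above needs the lower bound $R(b_n)\ge(1-o(1))/\delta_n$, which your proposal never proves. The paper obtains it by truncating the integral at $L=(t\delta^2)^{1/4}/\delta$. The same lower bound is what your clock argument needs when $m_n\gtrsim n$, since the trivial bound $a_n\ge1$ only gives $na_n^2/b_n\ge n/b_n$. Your Mills-ratio formula $a_n\asymp m/(b_n-m+\sqrt m)$ is also wrong when $b_n\gg m_n$, where in fact $a_n\to 1$. Likewise, Chernoff bounds $b_n-m_n$ only from above. The divergence $(b_n-m_n)/\sqrt{m_n}\to\infty$ that you need for $(m_n-1)a_n^2/b_n^2\to0$ is exactly where the paper invokes Temme.

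\textbf{A genuine simplification.} Your derivative route to the local limit can be made rigorous without the lower bound on $R$, which would simplify that part of the proof. The function $s\mapsto e^{-s}(1+s/t)^{m_n-1}$ is log-concave, so its normalization has mean at most $R(t)$. Therefore the per-unit slope of $\log R$ is at most $(m_n-1)R(b_n)R(t)/t^2\le(m_n-1)/(t^2\delta_n\delta(t))$. This tends to $0$ uniformly on $|u|\le A$ once $(m_n-1)/(b_n-m_n+1)^2\to0$.
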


\begin{proof}
The existence and uniqueness of $b_n$ are elementary, since $Q_m(0)=1$, $Q_m(t)\to0$ as $t\to\infty$, and $Q_m'(t)=-f_m(t)<0$ for $t>0$.

First suppose that the sequence $m_n$ is bounded.  Passing to finitely many fixed shapes, we may use the elementary expansion
\[
        Q_m(y)\sim e^{-y}{\frac{y^{m-1}}{(m-1)!}},
        \qquad
        h_m(y)={\frac{f_m(y)}{Q_m(y)}}\to1,
        \qquad y\to\infty,
\]
uniformly over the finitely many possible values of $m$.  Hence $b_n=\log n+O(\log\log n)$, $a_n\to1$, and
\[
        \sup_{|u|\le A}|a_nh_{m_n}(b_n+a_nu)-1|\to0
\]
for each fixed $A<\infty$.  The local ratio estimate follows by integrating $d\log Q_m(t)/dt=-h_m(t)$.  The left-window estimate follows on any window $R_n\to\infty$ with $R_n=o(b_n)$ and sufficiently slow growth; thinning $R_n$ if necessary gives the displayed window condition.  Finally,
\[
        {\frac{n a_n^2}{b_n}}\sim {\frac{n}{\log n}}\to\infty.
\]

It remains to consider the case $m_n\to\infty$.  Write
\[
        \alpha=m_n,\qquad k=\alpha-1,\qquad b=b_n,\qquad
        \lambda={\frac b\alpha}.
\]
Let $\eta_n$ be Temme's signed turning-point parameter associated with $(\alpha,b)$, and put
\[
        z_n=\eta_n\sqrt{\alpha}.
\]
We first locate the quantile on this scale.  At $x=\alpha$ one has $\eta=0$, and Temme's uniform expansion gives $Q(\alpha,\alpha)=1/2+O(\alpha^{-1/2})$.  Since $Q_\alpha(b)=1/n\to0$, it follows that $b>\alpha$ for all sufficiently large $n$, so $\eta_n>0$.  If $z_n$ failed to tend to infinity, then along a subsequence $0\le z_n\le A$ for some fixed $A$.  On that subsequence $\eta_n=O(\alpha^{-1/2})$, and Temme's expansion at the turning point gives
\[
        Q_\alpha(b)={\frac12}\operatorname{erfc}(z_n/\sqrt2)+O(\alpha^{-1/2}),
\]
which is bounded below by a positive constant depending only on $A$.  This contradicts $Q_\alpha(b)=1/n\to0$.  Hence $z_n\to\infty$.

For the upper bound on $z_n$, use the Chernoff bound for a Gamma$(\alpha,1)$ variable $S_\alpha$: for $\lambda\ge1$,
\[
        \Pbb(S_\alpha\ge \alpha\lambda)
        \le \exp\{-\alpha(\lambda-1-\log\lambda)\}
        =\exp\{-z_n^2/2\}.
\]
Because $Q_\alpha(b)=1/n$, this gives
\begin{equation}\label{eq:temme-quantile-order}
        z_n\to\infty,
        \qquad
        z_n^2\le 2\log n.
\end{equation}
This is the only place where Temme's uniform expansion is used in the large-shape proof.

For $\lambda\ge1$ the quantities $\eta^2=2(\lambda-1-\log\lambda)$ and $(\lambda-1)^2/\lambda$ are comparable up to absolute constants.  Therefore, with
\[
        \delta_n=1-{\frac{k}{b}},
\]
we have
\begin{equation}\label{eq:delta-scale}
        b\delta_n^2\to\infty,
        \qquad
        b\delta_n^2=O(1+\log n).
\end{equation}
Indeed, replacing $k=\alpha-1$ by $\alpha$ changes $b\delta_n^2$ only by lower-order terms, while
\[
        \alpha\,{\frac{(\lambda-1)^2}{\lambda}}
\]
is comparable to $z_n^2=\alpha\eta_n^2$.

We now pass from the turning-point scale to the hazard scale using only the integer-shape identity
\begin{equation}\label{eq:R-integral}
        R_k(t):={\frac{Q_{k+1}(t)}{f_{k+1}(t)}}
        =\int_0^\infty e^{-s}\left(1+{\frac{s}{t}}\right)^k\dd s,
        \qquad t>0.
\end{equation}
For $t>k$ set $\delta(t)=1-k/t$ and $T(t)=t\delta(t)^2$.  We claim that if $T(t)\to\infty$, then
\begin{equation}\label{eq:R-asymp}
        \delta(t)R_k(t)\to1,
        \qquad
        R_k'(t)\to0.
\end{equation}
The upper bound follows immediately from $\log(1+s/t)\le s/t$:
\[
        R_k(t)\le\int_0^\infty e^{-\delta(t)s}\dd s={\frac1{\delta(t)}}.
\]
For the matching lower bound, put $L=T(t)^{1/4}/\delta(t)$.  Then $\delta(t)L\to\infty$ and $L/t\to0$.  Uniformly for $0\le s\le L$,
\[
        k\log\left(1+{\frac{s}{t}}\right)
        \ge {\frac{ks}{t}}-O\left({\frac{s^2}{t}}\right),
\]
and $s^2/t\le L^2/t=T(t)^{-1/2}=o(1)$.  Hence
\[
        R_k(t)\ge(1-o(1))\int_0^L e^{-\delta(t)s}\dd s
        =(1-o(1)){\frac1{\delta(t)}}.
\]
This proves $\delta(t)R_k(t)\to1$.  Differentiating \eqref{eq:R-integral}, or equivalently differentiating $Q/f$, gives
\begin{equation}\label{eq:R-derivative}
        R_k'(t)=\delta(t)R_k(t)-1,
\end{equation}
so $R_k'(t)\to0$ as well.

At the quantile $b=b_n$, \eqref{eq:R-asymp} and \eqref{eq:delta-scale} give
\begin{equation}\label{eq:hazard-clock-final}
        b_n h_{m_n}(b_n)^2\sim b_n\delta_n^2=O(1+\log n),
        \qquad
        b_nh_{m_n}(b_n)\sim b_n\delta_n\to\infty.
\end{equation}
Since $a_n=1/h_{m_n}(b_n)$, this proves
\[
        {\frac{n a_n^2}{b_n}}
        ={\frac{n}{b_nh_{m_n}(b_n)^2}}
        \ge {\frac{c n}{1+\log n}}\to\infty
\]
and also $b_n/a_n=b_nh_{m_n}(b_n)\to\infty$.

We next prove the local hazard estimate.  Fix $A<\infty$ and let
\[
        t=b_n+a_nu,
        \qquad |u|\le A.
\]
Because $a_n\sim1/\delta_n$ and $b_n\delta_n\to\infty$, we have $t/b_n\to1$ and $\delta(t)/\delta_n\to1$ uniformly for $|u|\le A$.  Hence $T(t)=t\delta(t)^2\to\infty$ uniformly on this bounded $u$-window.  By \eqref{eq:R-asymp}--\eqref{eq:R-derivative}, $R_k'(t)=o(1)$ uniformly there.  Since the length of the $t$-window is $O(a_n)=O(R_k(b_n))$, it follows that
\[
        {\frac{R_k(b_n+a_nu)}{R_k(b_n)}}\to1
        \qquad (|u|\le A),
\]
uniformly.  Therefore
\begin{equation}\label{eq:app-local-hazard-final}
        \sup_{|u|\le A}\left|a_nh_{m_n}(b_n+a_nu)-1\right|\to0.
\end{equation}
Integrating the logarithmic derivative gives, uniformly for bounded $x$,
\[
\log{\frac{Q_{m_n}(b_n+a_nx)}{Q_{m_n}(b_n)}}
        =-\int_0^x a_nh_{m_n}(b_n+a_nu)\dd u\to -x,
\]
and hence the local ratio limit.

For $x\ge0$, the Gamma upper-tail hazard is increasing.  Since $a_n=1/h_{m_n}(b_n)$,
\[
        \log{\frac{Q_{m_n}(b_n+a_nx)}{Q_{m_n}(b_n)}}
        =-\int_0^x a_nh_{m_n}(b_n+a_nu)\dd u\le -x,
\]
which proves the one-sided right-tail bound.

Finally, the local convergence in \eqref{eq:app-local-hazard-final} may be made uniform on a slowly growing window by a diagonal choice.  Thus there is $\widetilde R_n\to\infty$ such that
\[
        {\frac12}\le a_nh_{m_n}(b_n+a_nu)\le {\frac32},
        \qquad |u|\le \widetilde R_n.
\]
Choose $R_n\to\infty$ with
\[
        R_n\le \widetilde R_n,
        \qquad
        R_n\le {\frac{b_n}{2a_n}},
        \qquad
        \left(1+{\frac{b_n}{a_n}}\right)^r\exp\{-\tfrac12 e^{R_n/2}\}\to0
\]
for each fixed $r>0$; this is possible because $b_n/a_n\to\infty$ and $\widetilde R_n\to\infty$, and the double exponential dominates any prescribed polynomial rate after thinning the window.  For $0\le x\le R_n$,
\[
        nQ_{m_n}(b_n-a_nx)
        ={\frac{Q_{m_n}(b_n-a_nx)}{Q_{m_n}(b_n)}}
        =\exp\left\{\int_0^x a_nh_{m_n}(b_n-a_nu)\dd u\right\}
        \ge e^{x/2}.
\]
This is the asserted left-tail estimate, after reducing the absolute constant $c$ if necessary.  The proposition follows.
\end{proof}

\section{Independent local Hessian proof}\label{app:hessian}

Although Corollary~\ref{cor:strict-local-main} now follows from the global extremality theorem, the following alternate argument is useful because it exposes a second positivity mechanism: infinitesimal heterogeneity creates a strengthened size-bias of the uniform Gamma maximum.

\begin{theorem}[Strict local Hessian positivity]\label{thm:local-all-m}
Fix $m\ge1$ and $N\ge2$.  Let
\[
        V_{m,N}(p)=\Var_p(T_m(N)).
\]
If $\sum_i h_i=0$, then
\[
        D^2V_{m,N}(u)[h,h]=C_{m,N}\sum_{i=1}^Nh_i^2
\]
with $C_{m,N}>0$.
\end{theorem}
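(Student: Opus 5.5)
The plan has two parts: symmetry forces the Hessian at $u$ to be a multiple of $\sum_i h_i^2$, and a second-order radial computation shows the multiple is positive.

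\textbf{Step 1 (the form).} The map $p\mapsto V_{m,N}(p)$ is smooth and permutation invariant; the rational formula of Proposition~\ref{prop:finite-rational} gives smoothness on the open simplex. Its Hessian at $u$, restricted to the tangent space $\{h:\sum_ih_i=0\}$, therefore commutes with the permutation action. On that space the action is the standard irreducible representation. By Schur's lemma, a symmetric bilinear form invariant under this action is a scalar multiple of $\sum_i h_i^2$. Equivalently, one computes the Hessian in coordinates: it has the form $a\sum_ih_i^2+b(\sum_ih_i)^2$, and the second term vanishes on the tangent space. This yields the identity with some real constant $C_{m,N}$, and it remains to show $C_{m,N}>0$.

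\textbf{Step 2 (second-order radial expansion).} Along $p(\theta)=u+\theta h$, set $G_\theta(t)=\prod_iF(q_it)$ with $q_i=1/N+\theta h_i$. Expanding to second order in $\theta$ and using $\sum_ih_i=0$, the first-order term of $\log G_\theta$ vanishes. With $y=t/N$, the second-order term is
\[
\tfrac12\theta^2t^2\sum_ih_i^2\,\frac{d}{dy}\phi(y).
\]
Hence
\[
G_\theta(t)=G_0(t)\Bigl[1+\tfrac12\theta^2\|h\|^2t^2\phi'(t/N)+O(\theta^3)\Bigr],
\]
where $G_0=F(t/N)^N$ is the uniform Gamma maximum. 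Put
\[
w(t)=-\tfrac12 t^2\phi'(t/N)\,G_0(t)\ge0,
\]
which is nonnegative by Lemma~\ref{lem:reverse-hazard-ratio}. The Poisson-corrected variance $W=\Var X-\E X$ then has second derivative at $\theta=0$ equal to
\[
2\|h\|^2\int_0^\infty\bigl[2t-(1+2\mu_0)\bigr]w(t)\,dt,
\]
exactly as in \eqref{eq:Wprime-general}. The first-order term is zero, so no cross terms arise. By the transfer identity $\Var T=\Var X-\E X$, this gives $C_{m,N}=2\int[2t-1-2\mu_0]w$.

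\textbf{Step 3 (positivity via the size-biased MLR).} I would reuse the proof of Lemma~\ref{lem:shape-monotonicity}. The uniform density is $g_0(t)=G_0(t)\phi(t/N)$. It therefore suffices to show that
\[
\frac{w(t)}{t\,g_0(t)}=-\frac{t\,\phi'(t/N)}{2\phi(t/N)}=-\tfrac N2\,e(t/N)
\]
is increasing, where $e$ is the log-scale derivative from Lemma~\ref{lem:reverse-hazard-ratio}. That lemma states exactly that $e$ is strictly negative and strictly decreasing, so the ratio is positive and strictly increasing. Chebyshev's inequality then puts the centroid of $w$ at or above $\E X_0^2/\E X_0\ge\mu_0+1$, using $\Var X_0\ge\E X_0$. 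This gives $C_{m,N}\ge 2\int w>0$.

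\textbf{Main obstacle.} The delicate point is Step 2. It requires justifying differentiation twice under the tail integrals; this is routine because of the Gamma tails, uniformly for small $\theta$. It also requires checking that the first-order terms really vanish, both in $G_\theta$ and in the product $(\E X)^2$ inside the variance. They do, since $\partial_\theta\E X|_{0}=\int(-\partial_\theta G)=0$ when $\sum h_i=0$. Once that bookkeeping is done, positivity is immediate from Lemma~\ref{lem:reverse-hazard-ratio}.
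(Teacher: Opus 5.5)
Your proposal is correct and is essentially the paper's Appendix~\ref{app:hessian} argument. Your $C_{m,N}=2\int_0^\infty[2t-(1+2\mu_0)]w(t)\,dt$ is exactly \eqref{eq:Cmn-integral-new} after the substitution $t=Ny$, and your tilt $w/(tg_0)=-\tfrac N2 e(t/N)$ is the paper's $b_m$ (indeed $b_m=-e$), followed by the same Chebyshev size-bias centroid step. The only differences are local. You cite Lemma~\ref{lem:reverse-hazard-ratio} where the appendix proves that $b_m$ is increasing by coefficient comparison. You close with $\Var X_0\ge\E X_0$ from Poissonization where the appendix uses the Gamma--Dirichlet factorization. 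Finally, since your $w$ does not depend on $h$, the Schur's-lemma Step~1 is redundant.
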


\begin{proof}
By symmetry it is enough to compute the Hessian in the two-marked direction
\[
        p_1={\frac{1}{N}}+\varepsilon,
        \qquad
        p_2={\frac{1}{N}}-\varepsilon,
        \qquad
        p_3=\cdots=p_N={\frac{1}{N}}.
\]
Let $F=F_m$ and let $E_u=\E_uT_m(N)$.  Differentiating the Poissonized profile twice and using the exact variance identity gives the tangent eigenvalue
\begin{equation}\label{eq:Cmn-integral-new}
        C_{m,N}=N^3\int_0^\infty y^2F(y)^N[-(\log F)''(y)]
        [2Ny-(1+2E_u)]\,dy.
\end{equation}
Let $Y_N=\max_{1\le i\le N}G_i$, where the $G_i$ are iid $\GammaDist(m,1)$, and define
\[
        a_m(y)=y^2\left({\frac{F'(y)}{F(y)}}-{\frac{F''(y)}{F'(y)}}\right).
\]
Since the density of $Y_N$ is $NF(y)^{N-1}F'(y)$ and $E_u=N\E Y_N$, \eqref{eq:Cmn-integral-new} is equivalent to
\begin{equation}\label{eq:Cmn-cov-app}
        {\frac{C_{m,N}}{N^2}}=2N\Cov(Y_N,a_m(Y_N))-\E a_m(Y_N).
\end{equation}
Now
\[
        {\frac{F''(y)}{F'(y)}}={\frac{m-1}{y}}-1,
\]
so
\[
        a_m(y)=y\,b_m(y),
        \qquad
        b_m(y)=y{\frac{F'(y)}{F(y)}}+y-(m-1).
\]
We now prove directly that $b_m$ is strictly increasing.  Put
\[
        D_m(y)=e^y-\sum_{k=0}^{m-1}{\frac{y^k}{k!}}
        =\sum_{n=m}^{\infty}{\frac{y^n}{n!}}.
\]
Since $F_m(y)=e^{-y}D_m(y)$ and $F_m'(y)=e^{-y}y^{m-1}/\Gamma(m)$, we have
\[
        b_m(y)={\frac{y^m}{(m-1)!D_m(y)}}+y-(m-1).
\]
Differentiating gives
\[
        b_m'(y)=
        {\frac{(m-1)!D_m(y)^2+m y^{m-1}D_m(y)-y^mD_m'(y)}{(m-1)!D_m(y)^2}}.
\]
It remains to show that the numerator is positive.  The coefficient of $y^{2m+s}$, $s\ge0$, in the numerator equals
\[
        (m-1)!\sum_{r=0}^{s}{\frac{1}{(m+r)!(m+s-r)!}}
        -{\frac{s+1}{(m+s+1)!}}.
\]
After multiplication by $\Gamma(m+s+2)$, this coefficient becomes
\[
        \sum_{r=0}^{s}
        {\frac{(m-1)!(m+s+1)!}{(m+r)!(m+s-r)!}}-(s+1).
\]
For each $0\le r\le s$,
\[
        {\frac{(m-1)!(m+s+1)!}{(m+r)!(m+s-r)!}}
        =\prod_{j=0}^{r}{\frac{m+s-r+1+j}{m+j}}>1.
\]
Hence every coefficient is strictly positive, and therefore $b_m'(y)>0$ for $y>0$.  Also $D_m(y)\sim y^m/\Gamma(m+1)$ as $y\downarrow0$, so $b_m(0+)=1$.  Thus $b_m(y)>1$ for $y>0$, and in particular $a_m(y)=yb_m(y)>0$.

Consequently, for any positive random variable $Y$,
\[
        {\frac{\Cov(Y,a_m(Y))}{\E a_m(Y)}}\ge {\frac{\Var(Y)}{\E Y}}.
\]
Indeed, if $Y'$ is an independent copy of $Y$, then
\[
\E[Y^2b_m(Y)]\E[Y]-\E[Yb_m(Y)]\E[Y^2]
={\frac{1}{2}}\E\{YY'(Y-Y')(b_m(Y)-b_m(Y'))\}\ge0,
\]
because $b_m$ is increasing.
Applying this to $Y=Y_N$ gives
\[
        {\frac{C_{m,N}}{N^2}}\ge \E a_m(Y_N)\left({\frac{2N\Var(Y_N)}{\E Y_N}}-1\right).
\]
It remains to show the bracket is positive.  Write $G_i=SP_i$, where
\[
        S\sim\GammaDist(mN,1),
        \qquad
        P=(P_1,\ldots,P_N)\sim\operatorname{Dirichlet}(m,\ldots,m),
        \qquad
        S\perp P.
\]
If $M=\max_iP_i$, then $Y_N=SM$.  Therefore
\[
        \E Y_N=mN\E M,
\]
and
\[
        \Var(Y_N)=mN\E M^2+(mN)^2\Var(M).
\]
Since $M\ge1/N$ almost surely and $M>1/N$ with positive probability,
\[
        2N\Var(Y_N)\ge 2mN^2\E M^2>mN\E M=\E Y_N.
\]
Thus $C_{m,N}>0$.
\end{proof}

\end{document}